\documentclass[11pt,a4paper,english]{amsart}
\usepackage{amssymb,amsmath,amsthm,babel,enumerate,ulem}
\usepackage[unicode=true,
 bookmarks=true,bookmarksnumbered=true,bookmarksopen=false,
 breaklinks=false,pdfborder={0 0 1},backref=false,colorlinks=false]{hyperref}
\newtheorem{theorem}{Theorem}[section]
\newtheorem*{theorem*}{Theorem}
\newtheorem{proposition}[theorem]{Proposition}

\newtheorem*{definition*}{Definition}
\newtheorem{assumption}[theorem]{Assumption}
\newtheorem{lemma}[theorem]{Lemma}
\newtheorem*{lemma*}{Lemma}

\newtheorem{corollary}[theorem]{Corollary} 
 
\newtheorem*{remark*}{Remark}
\newtheorem{convention}[theorem]{Convention}

\newcommand{\Pt}{P}%{t}%A point

\newcommand{\N}{{\mathbb N}}
\newcommand{\Z}{{\mathbb Z}}
\newcommand{\Q}{{\mathbb Q}}
\newcommand{\C}{{\mathbb C}}
\newcommand{\R}{{\mathbb R}}
\newcommand{\Cu}{{\mathcal C}}
\newcommand{\F}{{\mathbb F}}

\renewcommand{\L}{{\mathcal L}}
\newcommand{\Qb}{\overline{\mathbb Q}}

\newcommand{\A}{{\mathbb A}}
\renewcommand{\P}{{\mathbb P}}
\newcommand{\Gm}{{\mathbb G}_{\rm m}}
\newcommand{\Norm}{{\rm Norm}}

\newcommand{\rank}{{\rm rank\,}}
\renewcommand{\div}{{\rm div}}

\newcommand{\ord}{{\rm ord}}
\newcommand{\ie}{{\it i.\,e.\ }}
\newcommand{\e}{{\varepsilon}}
\newcommand{\cf}{{\it cf}~}
\renewcommand{\O}{{\mathcal O}}

\newcommand{\rhog}{{\boldsymbol\rho}}
\newcommand{\alphag}{{\boldsymbol\alpha}}

\newcommand{\omegag}{{\boldsymbol\omega}}
\newcommand{\gammag}{{\boldsymbol\gamma}}

\newcommand{\etag}{{\boldsymbol\eta}}
\renewcommand{\thetag}{{\boldsymbol\theta}}

\newcommand{\psig}{{\boldsymbol\psi}}

\newcommand{\ug}{{\bf u }}

\newcommand{\xg}{{\bf x }}
\newcommand{\yg}{{\bf y }}
\newcommand{\Xg}{{\bf X }}

\newcommand{\fb}{{\bf f }}
\newcommand{\gb}{{\bf g }}
\newcommand{\1}{{\bf 1 }}

\renewcommand{\mid}{\,\vert\,}

\newcommand{\stkout}[1]{\ifmmode\text{\sout{\ensuremath{#1}}}\else\sout{#1}\fi}

\usepackage{color}

\numberwithin{equation}{section}

\title{Pencils of norm form equations and a conjecture of Thomas, II.}
\author{F. Amoroso, D. Masser, U. Zannier}
\date{August 20, 2026}
\begin{document}

\maketitle
%\date{}

\vglue1cm
\noindent
{\bf Abstract.} We continue our studies on parametric norm forms $F_t({\bf x})$, with ${\bf x}=(x_0,x_1,\ldots,x_{d-1})$ lying in some parametric linear subvariety $W_t$ and integers $t$ sufficiently large. In \cite{Am-Ma-Za2} we proved some effective specialization results for integer solutions $\bf x$ of $F_t({\bf x})=1$. Here we modify our techniques to treat $F_t({\bf x})=q$ for an arbitrary integer $q$. Under mild conditions (not however including the crucial index assumption in \cite{Am-Ma-Za2}) we show that all $\bf x$ are polynomially bounded in terms of $|q|$ and $t$. As in \cite{Am-Ma-Za2} we use the methods of our \cite{Am-Ma-Za} based on diophantine approximation techniques to bound certain heights. In particular we do not use linear forms in logarithms and indeed it seems unlikely that those can lead to such polynomial bounds, even for Thue equations in two variables with $x_2=\cdots=x_{d-1}=0$. We present an example with eight variables.

\noindent
{\tt 2020 MSC codes: 11D57, 11G50.} 

\section{Introduction}
\label{intro}
Let $P\in \Z[T,X]$ be an irreducible polynomial, monic of degree $d$ in $X$, and let $\xi\in\overline{\Q(T)}$ be one of its roots. We fix a specialization map $\Q\to\Qb$ which for any $u\in\Q(\xi)$ sends $t\in\Q$ (we exclude the finitely many ramified ones) to $u_t$; see \cite{Am-Ma-Za2}, beginning of section~\ref{sec:ex}, for a precise definition (for large positive $t$ one can also use the Puiseux series - see for example just after \ref{L} below). The basic parametric equation in~\cite{Am-Ma-Za2} was 
\begin{equation}
\label{NormForm}
\Norm(x_0+x_1\xi_t+\cdots+x_{d-1}\xi_t^{d-1})=1
\end{equation}
to be solved in suitably restricted integers $x_0,\ldots,x_{d-1}$ as the integer parameter $t$ varies. We shall soon see that under certain circumstances $[\Q(\xi_t):\Q]=d$ for all sufficiently large positive integers $t$. At any rate it is not hard to see that for $t\in\N$ large the numbers $\varrho_1$ and $2\varrho_2$ of real and non-real zeroes of $P(t,X)$ are both constants. This can be seen by looking at sign changes, as in \cite[Lemma 3.1]{Am-Ma-Za2} or by considering Puiseux series at $T=\infty$ (for example $\varrho_1$ is the number of such series with real coefficients). We write 
$$
\varrho=\varrho_1+\varrho_2-1.
$$ 
and we make the following
\begin{assumption}~
\label{ass:Thomas-H}
\begin{enumerate}[(1)]
\item There exist multiplicatively independent elements $u_1,\ldots,u_\varrho$ of $\Z[T,\xi]^*$.
%\item {\red The rank of $\Z[T,\xi]^*$ and the rank of the group of units of $\Q(\xi_t)$ are the same for large $t$.}
\item Suppose that $u$ is in $\Z[T,\xi]^*$ and the algebraic number $v$ is in the group generated by the conjugates of $u$. Then $v$ is a root of unity.
\end{enumerate}
\end{assumption}

This is a slightly corrected version of Assumption 2.1 of \cite{Am-Ma-Za2}. In a short appendix we list the modifications needed in that paper.

Let $(\Z[T,\xi]^*)_t=\{u_t,\; u\in\Z[T,\xi]^*\}$. Note that $(\Z[T,\xi]^*)_t\subseteq \Z[\xi_t]^*$. Under our assumptions, the index 
$$
[\Z[\xi_t]^*:(\Z[T,\xi]^*)_t]
$$ 
is finite for large $t$ and moreover uniformly bounded (\cite[Theorem 2.3]{Am-Ma-Za2} see appendix).\\

As a consequence of the specialization theorem~\cite[Theorem 1.3]{Am-Ma-Za} we can state our earlier result as follows.

\begin{theorem}{\rm (Theorem 2.2 of \cite{Am-Ma-Za2}).}
\label{thm:AMZ2-Thm2.2}
Let us assume~\ref{ass:Thomas-H}. Let $W\subseteq\A^d$ be a proper subvariety defined over $\Q(T)$. Then there exists an effective $t_0>0$ such that for $t\geq t_0$ with 
\begin{equation}
\label{eq:index}
\Z[\xi_t]^*=(\Z[T,\xi]^*)_t,
\end{equation} 
the solutions $(x_0,\ldots,x_{d-1})\in W_t(\Z)$ of 
\begin{equation}
\label{norm-spe0}
\prod_{j=1}^d(x_0+x_1\sigma_j(\xi_t)+\cdots+x_{d-1}\sigma_j(\xi_t)^{d-1})=1
\end{equation}
are specializations of functional solutions $\Xg=(X_0,\ldots,X_{d-1})\in W(\Z[T])$ of 
\begin{equation*}
\prod_{j=1}^d(X_0+X_1\sigma_j(\xi)+\cdots+X_{d-1}\sigma_j(\xi)^{d-1})=1.
\end{equation*}
\end{theorem}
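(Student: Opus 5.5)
Since $[\Q(\xi_t):\Q]=d$ for large $t$, a solution $\xg\in W_t(\Z)$ of \eqref{norm-spe0} gives a unit $\alpha=x_0+x_1\xi_t+\cdots+x_{d-1}\xi_t^{d-1}\in\Z[\xi_t]^*$. Under the index hypothesis \eqref{eq:index} we can write $\alpha=u_t$ for some $u\in\Z[T,\xi]^*$. By Assumption~\ref{ass:Thomas-H}, together with the fact that the torsion of $\Z[T,\xi]^*$ is finite, we have $u=\zeta u_1^{a_1}\cdots u_\varrho^{a_\varrho}$ with $\zeta$ a root of unity and $\ag=(a_1,\ldots,a_\varrho)\in\Z^\varrho$ (up to a fixed finite index, which we absorb into finitely many cosets). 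Expanding $u$ in the basis $1,\xi,\ldots,\xi^{d-1}$ gives $u=X_0+X_1\xi+\cdots+X_{d-1}\xi^{d-1}$ with $X_i\in\Z[T]$ (using monicity of $P$). Specializing yields $x_i=X_i(t)$, because $1,\xi_t,\ldots,\xi_t^{d-1}$ are linearly independent over $\Q$. The problem therefore becomes: show that if $\Xg(t)\in W_t$, then $\Xg\in W$, provided $t\ge t_0$ effective and independent of $\ag$.

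To do this, let $G_1,\dots,G_m\in\Q(T)[X_0,\ldots,X_{d-1}]$ be equations for $W$. Substitute the expressions of $X_i$ as linear combinations (with coefficients in $\Q(T,\xi)$, via the inverse Vandermonde matrix in the conjugates $\sigma_j(\xi)$) of the conjugates $\sigma_j(u)=\sigma_j(\zeta)\prod_k\sigma_j(u_k)^{a_k}$. Then each condition $G_l(\Xg(t))=0$ becomes a ``generalized unit equation'': a vanishing sum $\sum c_{\mathbf m}(t)\,\gammag_t^{\mathbf m}=0$ over monomials in the specialized conjugates of $u_1,\ldots,u_\varrho$ with exponents linear in $\ag$. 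In other words we have a point $\gammag_t$ of a fixed multiplicative group variety lying in a subvariety $V_t$ that depends on $t$. I would then invoke the specialization theorem \cite[Theorem 1.3]{Am-Ma-Za}. This says, roughly, that for a curve-family of points in a torus (here the point $(\sigma_j(u_k))_{j,k}$ over $\Q(T)$) whose specializations lie in a subvariety, either the specialization lies in a proper torsion coset coming from a functional relation, or $t$ is bounded effectively. Part~(2) of Assumption~\ref{ass:Thomas-H} is exactly what excludes spurious multiplicative relations among conjugates at the specialized level that do not hold generically. This ensures that the multiplicative relations among $\sigma_j(u_k)_t$ for large $t$ are the same as those among the $\sigma_j(u_k)$.

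The key step, and the main obstacle, is ruling out anomalous relations uniformly in the unbounded exponent vector $\ag$. One has to check that the vanishing sum at $t$ forces, for large $t$, a vanishing subsum pattern that also holds identically in $T$. I would handle this via the height bound in \cite{Am-Ma-Za}. The heights $h(\gammag_t)$ grow like $\log t$, while the theorem gives a bound of the form $h\le c$ for points outside the anomalous locus, so a contradiction follows for $t\ge t_0$. In the degenerate case one obtains a torsion coset in which the relation $G_l(\Xg)=0$ persists generically. Combining the cases shows $G_l(\Xg)=0$ identically, so $\Xg\in W(\Z[T])$, and by construction it satisfies the functional norm equation.
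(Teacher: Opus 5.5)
This is essentially the paper's route. The paper quotes this result from \cite{Am-Ma-Za2} as a consequence of the specialization theorem \cite[Theorem 1.3]{Am-Ma-Za}, reached by your reduction: the index hypothesis gives $\alpha=u_t$ with $u\in\Z[T,\xi]^*$; the inverse of $A=(\sigma_i(\xi^j))$ turns $\xg\in W_t$ into $\gammag_t\in V_t$ for $\gammag=(\sigma_1(u),\ldots,\sigma_d(u))$ in the fixed finitely generated group $\Gamma\subseteq\Gm^d(\F)$, with $V$ defined over the normal closure $\F$ (not over $\Q(T,\xi)$ as you wrote); and the theorem returns $\gammag\in V$, i.e.\ $\Xg\in W(\Z[T])$. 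Two corrections: Assumption~\ref{ass:Thomas-H}(2) enters precisely to verify that $\Gamma$ is constant-free, since the image of $\gammag$ under any character lies in the group generated by the conjugates of $u$, and that is the hypothesis of the cited theorem. Also, the theorem already gives $\gammag\in V$ uniformly in $\gammag$, so your separate height and degenerate torsion-coset discussion is not needed.
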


Next we proved (\cite[Theorem 2.4]{Am-Ma-Za2}) a uniform bound for the index valid for {\sl all} sufficiently large~$t$. This bound allowed us to prove a completely (\ie without assumption on the index) effective description of the solutions of the norm form equation~\eqref{NormForm} at the price of a more technical statement.\\

In this paper we modify the above Theorem~\ref{thm:AMZ2-Thm2.2} in several ways.

(i) On a significant technical level we are able to eliminate the condition~\eqref{eq:index}, which can be rather tedious (at the very least - see more below) to verify even for specific examples with $d=5$. 

(ii) Then we generalize the equation~\eqref{norm-spe0} to allow an arbitrary integer $q$ on the right-hand side. 

(iii) But crucially we estimate from above all solutions of the resulting equation, instead of relating them to specialization of functional solutions. This brings us much closer to the classical diophantine theory of norm form equations. 

(iv) Furthermore our estimates are effective, which cannot be achieved through a standard tool like the Subspace Theorem. 

(v) The estimates are even polynomial in $|q|$ and $t$, which we do not see how to achieve through the other standard tool of Linear Forms in Logarithms even for specific examples with $d=3$. 

(vi) Finally we assume that $W$ is linear, $d$ is odd, and we impose an even milder requirement about the extension ${\Q}(T,\xi)/{\Q}(T)$ which holds in some sense generically.

Regarding (ii) and (iii) above it seems unlikely that there is a specialization result for \eqref{norm-spe0} with 1 replaced by general $q$ in $\Z$. But possibly there is with 1 replaced by $Q(t)$ for any fixed non-zero $Q$ in $\Z[T]$.

Anyway, here is the main result of our paper.

\begin{theorem}
\label{thm:main}
Let us assume~\eqref{ass:Thomas-H}. Let also assume that $\Q(T,\xi)/\Q(T)$ is primitive (\ie that it does not contain non trivial subextensions) of odd degree.
%the Galois group $G$ of the normal closure of $\Q(T,\xi)/\Q(T)$ acts double transitively. 
Let $W\subsetneq\A^d$ be a linear subvariety defined over $\Q(T)$. Then there exist $c,C,t_0\geq1$ effectively depending only on $\xi$ and $W$ which satisfy the following assertion.\par 
Let $q$ be a non-zero integer and let $\xg=(x_0,\ldots,x_{d-1})\in W_t(\Z)$ be a solution of 
\begin{equation}
\label{eq:norm-spe}
\prod_{j=1}^d(x_0+x_1\sigma_j(\xi_t)+\cdots+x_{d-1}\sigma_j(\xi_t)^{d-1})=q
\end{equation}
for some positive integer $t$. Then if $t \geq t_0$ we have
\begin{equation}
\label{eq:bound-sol}
\max\{|x_0|,|x_1|,\ldots,|x_{d-1}|\} \leq t^{C}|q|^{c}.
\end{equation}
\end{theorem}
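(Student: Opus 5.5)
Write $\alpha=x_0+x_1\xi_t+\cdots+x_{d-1}\xi_t^{d-1}\in\Z[\xi_t]$, so that $\Norm(\alpha)=q$. The plan is to factor $\alpha$ as a unit times an element of controlled size, and then bound the unit.

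\textbf{Step 1: reduce to a bounded generator.} Since the ideal $(\alpha)$ has norm $|q|$, standard geometry of numbers in $\Q(\xi_t)$ gives $\alpha=\beta\varepsilon$. Here $\varepsilon\in\Z[\xi_t]^*$, and $\beta$ has all conjugates bounded by $|q|^{1/d}\exp(O(R_t))$, where $R_t$ is the regulator; one can also work with a fundamental domain for the unit lattice.

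\textbf{Step 2: control the regulator.} Under Assumption~\ref{ass:Thomas-H}, the index $[\Z[\xi_t]^*:(\Z[T,\xi]^*)_t]$ is uniformly bounded. The specialized units $u_{1,t},\ldots,u_{\varrho,t}$ have conjugates whose logarithms are $O(\log t)$, read off from the Puiseux expansions at $T=\infty$. So the unit lattice has a basis of length $O(\log t)$, and we may take $\varepsilon=\eta\, u_{1,t}^{a_1}\cdots u_{\varrho,t}^{a_\varrho}$ with $\eta$ from a bounded set of coset representatives. The reduced factor $\beta$ then satisfies $|\sigma_j(\beta)|\le t^{C_1}|q|^{1/d}$. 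The whole problem is now to bound the exponent vector $\mathbf a=(a_1,\ldots,a_\varrho)$ by $O(\log t+\log|q|)$. This suffices, because then every conjugate of $\alpha$ is at most $t^{C}|q|^{c}$, and inverting the Vandermonde matrix in the $\sigma_j(\xi_t)$ (whose entries are polynomial in $t$) bounds the $x_i$.

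\textbf{Step 3: use $W$.} Because $W$ is a proper linear subvariety, there is a nontrivial $\Q(T)$-linear relation $\sum_i\lambda_i(t)x_i=0$. Expressing the $x_i$ through the conjugates $\sigma_j(\alpha)$ turns this into a relation
$$\sum_{j=1}^d\mu_j(t)\,\sigma_j(\beta)\,\sigma_j(\varepsilon)=0,$$
which is a unit equation in $d$ terms with coefficients of height $O(\log t+\log|q|)$.

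\textbf{Step 4: bound $\mathbf a$ (main obstacle).} Here I would not use linear forms in logarithms, since they lose polynomiality. Instead I would use the height-bounding method of \cite{Am-Ma-Za}, as in \cite{Am-Ma-Za2}. Suppose $|\mathbf a|$ is large compared with $\log(t|q|)$. Consider the functional unit $u^{\mathbf a}=\prod u_i^{a_i}$ over $\Q(T,\xi)$. Its conjugates have orders at the places over $T=\infty$ that are linear in $\mathbf a$. By Assumption~\ref{ass:Thomas-H}(2) these orders cannot all vanish, so some two conjugates $\sigma_j(u^{\mathbf a})$ and $\sigma_k(u^{\mathbf a})$ differ in size by a factor $t^{\gg|\mathbf a|}$.

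The relation of Step 3 then forces a near-cancellation among the dominant terms. This yields a very good approximation to a fixed algebraic function relation, specialized at $t$. The diophantine approximation argument of \cite{Am-Ma-Za} bounds the height of such a near-solution polynomially, unless the relation holds identically in $T$.

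The identical case is excluded as follows. Primitivity of $\Q(T,\xi)/\Q(T)$ together with odd degree ensures that the Galois group acts in such a way that no subsum of conjugates can vanish identically. Such a vanishing would produce a proper intermediate field or a block system; odd degree rules out the quadratic/CM-type obstruction, where $\sigma_j(\varepsilon)\overline{\sigma_j(\varepsilon)}$ is constant. I expect this degenerate-subsum analysis to be the hardest part: making the Galois-theoretic nondegeneracy produce a quantitative gap uniformly in $t$.

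Once Step 4 gives $|\mathbf a|\ll\log t+\log|q|$, Steps 1 and 2 give \eqref{eq:bound-sol}, with $t_0$ chosen so that the Puiseux asymptotics and the index bound are valid.
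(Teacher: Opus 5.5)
Your Steps 1--3 match the paper's setup: the fundamental-domain reduction of Lemma~\ref{lem:box}, followed by the hyperplane relation coming from $W$. Lemma~\ref{lem:box} works directly with the full-rank subgroup $(\Z[T,\xi]^*)_t$, so you need neither the index bound nor coset representatives. After that the proposal has two genuine problems.

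\textbf{Wrong scale.} The specialized units have height $\asymp\log t$. So $|\mathbf a|\ll\log t+\log|q|$ only gives $h(\varepsilon)\ll(\log t)^2+\log t\log|q|$, that is, $\max|x_i|\le(t|q|)^{O(\log t)}$. This is exactly the non-polynomial shape the paper attributes to linear forms in logarithms, so your claim ``this suffices'' is false. What you need is $h(\etag)\ll\log(t|q|)$, i.e.\ $|\mathbf a|\ll 1+\log|q|/\log t$. Hence exponents as small as a large constant must be handled, not just those exceeding $\log(t|q|)$.

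\textbf{Step 4 is not an argument.} The relation is exact, so there is no ``near-cancellation''. Moreover, Theorem~\ref{thm:AMZ-Thm1.4} only treats $\sum\alpha_i f_i(P)^n=0$ with fixed $\mathbf f$ and a single exponent $n$. The missing ingredient is Theorem~\ref{thm:AMZbis}, which handles rank $>1$ as follows:
\begin{enumerate}[(i)]
\item Write the unit as $\omegag\rhog\mathbf f^n$ using Dirichlet simultaneous approximation of its exponent vector. Here $\mathbf f$ lies in a finite set depending on an auxiliary $Q$, and $\deg\rhog\ll n/Q+Q^{\kappa-1}$.
\item Absorb $\theta_i\rho_i(P)$ into the coefficients; their height is controlled by Lemma~\ref{lem:AMZ-Lem3.3}.
\item Use that the bound $h(P)\le rh(\alphag')/n+C_Q$ is linear in the coefficient height. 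This linearity is what makes the dependence on $|q|$ polynomial.
\end{enumerate}
The outcome is: $h(\gammag_P)\ge\max\{C'(1+h(P)),c'h(\alphag)\}$ forces $h(P)\le C''$. The main proof is then by contraposition: assuming $\Vert\mathbf x\Vert_\infty>t^C|q|^c$ verifies this hypothesis and bounds $\log t$.

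\textbf{Role of primitivity and odd degree.} You misplace it. Vanishing subsums cannot be ruled out: the relation involves the non-functional factor $\beta$ and $W$-dependent coefficients, some of which may be zero. They also need not be ruled out, since one simply passes to a minimal subsum $\theta_{1,t}\alpha_1+\cdots+\theta_{j,t}\alpha_j=0$, $j\ge2$, with no proper vanishing subsums.

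The real danger is that the truncated unit $(\eta_1,\ldots,\eta_j)$ has small height although $h(\etag)$ is large. Your remark that two conjugates of $u^{\mathbf a}$ differ by $t^{\gg|\mathbf a|}$ does not help, because those two conjugates need not lie in the surviving subsum. The paper closes this gap in four steps:
\begin{enumerate}[(i)]
\item For non-torsion $\eta=\gamma_t$ one has $\Q(\eta)=\Q(\xi_t)$, by Proposition~\ref{prop:AMZ2-Prop5.4} together with primitivity of $\Q(T,\xi)/\Q(T)$.
\item Any subfield of $\Q(\xi_t)$ other than $\Q$, $\Q(\xi_t)$ or an imaginary quadratic field contains a non-torsion unit. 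Some power of it is a specialized functional unit, which generates $\Q(\xi_t)$, a contradiction (Proposition~\ref{prop:Qmu+}).
\item Odd $d$ removes the imaginary quadratic case, so $\Q(\eta)/\Q$ is primitive.
\item The block argument (Lemma~\ref{lem:include}, Corollary~\ref{cor:height}) then gives $h(\etag)\le 2^{d-2}h(\eta_1/\eta_2)<2^d h(\eta_1,\ldots,\eta_j)$.
\end{enumerate}
This last inequality is precisely the uniform-in-$t$ gap you anticipated needing.
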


We should emphasize that even the finiteness of solutions of~\eqref{eq:norm-spe}, for fixed $t$ however large, is very far from obvious. The only previous proof that we know comes from the Subspace Theorem; see for example Theorem 10B of Schmidt~\cite{sch} (p.242). It is not difficult to check that his conditions for finiteness are implied by our conditions of primitivity and odd $d$. But of course his finiteness is ineffective.

We will see in section~\ref{sec:ex2} that the primitivity indeed has a generic character. 

And section~\ref{sec:ss} contains a discussion concerning the solution with the maximum in~\eqref{eq:bound-sol}.\\

We have reason to believe that the bound~\eqref{eq:bound-sol} is essentially best possible regarding the dependence on $|q|$ and $t$. Let us consider the equation
\begin{equation}
\label{eq:ge}
x^3-(t^3-1)y^3=q
\end{equation}
which was already (with $q=1$) our guiding example in [2]. If $q=p^3$ it has the solution $(x,y)=(tp,p)$ with
$$\max\{|x|,|y|\} = t|p|=t|q|^{1/3}.$$

This~\eqref{eq:ge} also allows us a pause to explain the novelty of our method. In~\cite{Za} (pp. 349-352) the third author treats~\eqref{eq:ge} using hypergeometric functions (as introduced by~\cite{Be} - these will be used also in our section~\ref{sec:ss} for a different purpose). Here we sketch how to treat~\eqref{eq:ge} with our new method.
%The easy basic case of~\eqref{eq:norm-spe} appears in~\cite[Exercise 5.13, p.349]{Za} where the author considers 
%the equation 
%\begin{equation}
%\label{eq:ge}
%x^3-(t^3-1)y^3=q
%\end{equation}
%which was already (with $q=1$) our guiding example in \cite{Am-Ma-Za}. In~\cite{Za} the author focuses on the closely related issue of irrationality measures for $\sqrt[3]{t^3-1}$. For the reader's convenience, let us briefly sketch how we can treat with this method the norm form equation~\eqref{eq:ge}.
Let $x,y,t$ satisfy~\eqref{eq:ge}. With $\xi=\root 3 \of {T^3-1}$ this says that $x-\xi_ty$ has norm $q$. Here $\gamma_t=t-\xi_t$ is a unit in $\Q(\xi_t)$. So by a fairly standard ``fundamental domain'' argument (see Lemma \ref{lem:box} for details) we can find an integer $n$ such that
\begin{equation}
\label{eq:beta}
\beta=\gamma_t^{-n}(x-\xi_ty)
\end{equation}
has height
\begin{equation}
\label{eq:hbeta}
h(\beta) \ll \lambda
\end{equation}
for $\lambda=\log(t|q|)$ and absolute implied constants (and from now on).

We note that $\mu=\log\max\{|x|,|y|\}$ satisfies
$$
\mu \ll h(x-\xi_ty)+h(x-\omega\xi_ty)+\log t
$$
with a primitive cube root $\omega$ of unity, and so by~\eqref{eq:beta},~\eqref{eq:hbeta} this gives
\begin{equation}
\label{eq:mu}
\mu \ll \lambda+(|n|+1)\log t.
\end{equation}
With the maps sending $\alpha$ to $\alpha',\alpha''$ of $\Q(\xi_t)$ defined by $\xi_t'=\omega\xi_t,\xi_t''=\omega^2\xi_t$ we have the classical ``Siegel Identity''
\begin{multline}
\label{eq:siegel}
\beta\gamma_t^n+\omega\beta'{\gamma'_t}^n+\omega^2\beta''{\gamma_t''}^n\\=(x-\xi_ty)+\omega(x-\omega\xi_ty)+\omega^2(x-\omega^2\xi_ty)=0.
\end{multline}
Traditional now is to use linear forms in logarithms (for more on the problems with this see section~\ref{sec:ex}).

If $n \geq n_0$ for some large absolute $n_0>0$ then we use instead one of the main results of~\cite{Am-Ma-Za2}, reproduced as Theorem~\ref{thm:AMZ-Thm1.4} below. This gives
$$\log t = h(t) \ll {\lambda \over n}+1.$$
Thus by~\eqref{eq:mu}
$$\mu \ll \lambda+n\log t \ll \lambda+n.$$
But~\eqref{eq:beta} gives again
$$n\log t \ll nh(\gamma_t)=h\left({x-\xi_ty \over \beta}\right)\ll \mu+\lambda.$$
Thus
$$\mu \ll \lambda+{\mu+\lambda \over \log t}.$$
So if $t \geq t_0$ we conclude
\begin{equation}
\label{eq:gea}
\log\max\{|x|,|y|\}=\mu \ll \lambda=\log(t|q|)
\end{equation}
as in Theorem~\ref{thm:main}.

And if $n \leq -n_0$ then we simply replace $n,\gamma_t$ in~\eqref{eq:siegel} by $-n,\gamma_t^{-1}$.

Finally if $|n| < n_0$ then~\eqref{eq:mu} gives the result at once.

We remark that we did not use~\eqref{eq:index}; indeed we never explicitly calculated $\Z[\xi_t]^*$ or even the generic $\Z[T,\xi]^*$. In fact the former has rank one and is generated by $-1$ and $\gamma_t$, and we used the mere fact that $\gamma_t$ is a non-torsion unit.

Here we make two remarks about this~\eqref{eq:index} in general.

The verification in~\cite{mau} of~\eqref{eq:index} for~\eqref{1} below is indeed tedious, and for~\eqref{2} is such that the details are omitted. It is to be presumed that for~\eqref{4a} it may test computational limits. But~\eqref{abcde} involves arbitrarily many parameters in the coefficients, and is almost certainly beyond this sort of computation.

In fact if~\eqref{eq:index} fails for infinitely many $t>0$, then some functional unit becomes a fixed perfect power when specialized to infinitely many $t$, while itself not being a perfect power. Then one can apply Siegel's Theorem on integral points and seemingly deduce a contradiction. But alas! this re-introduces ineffectivity.\\

To deal with~\eqref{eq:ge} the height estimate of~\cite[Lemma 2.3]{Be-Sc}, which was one of the starting points of~\cite{Am-Ma-Za}, would have been enough. Even for general cubics~\cite[Lemma 2.3]{Be-Sc} is not enough. But in a special family of cubics, a direct application of~\cite{Am-Ma-Za} gives a more precise result than~\ref{thm:main}. Let $P\in \Z[T,X]$ be an irreducible polynomial, monic of degree $3$ in $X$, such that $P(t,X)$ has only one real zero $\xi_t$ for all sufficiently large positive integers $t$. 
\begin{theorem}
\label{thm:main-cubic}
We suppose that there exists a non-constant unit $\gamma\in\Z[T,\xi]^*$. Then for  any $\e>0$ there are $C(\e),t_0(\e)$, effectively depending on $\e$ and $\xi$, such that for any positive integer $t\geq t_0(\e)$ and any integer $q\neq0$, the solutions $(x,y)\in\Z^2$ of
$$
\Norm(x-\xi_ty)=q
$$
satisfy 
$$
\max\{\vert x\vert,\vert y\vert\}\leq t^{C(\e)}\vert q\vert^{1+\e}.
$$
\end{theorem}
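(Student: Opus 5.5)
We follow the sketch given above for \eqref{eq:ge}, but keep track of constants instead of absorbing them into $\ll$. Since $P(t,X)$ has exactly one real root, we have $\varrho_1=\varrho_2=1$ and $\varrho=1$. The unit group of $\Z[\xi_t]$ therefore has rank one, and the specialization $\gamma_t$ of the non-constant functional unit $\gamma$ is a non-torsion unit for $t\geq t_0$. Its height satisfies $h(\gamma_t)\asymp \log t$, with constants depending only on $\gamma$. We are given a solution with $x-\xi_t y$ of norm $q$.

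\textbf{Reduction.} The fundamental domain argument (Lemma~\ref{lem:box}) gives an integer $n$ such that $\beta=\gamma_t^{-n}(x-\xi_t y)$ has all its conjugates roughly of absolute value $|q|^{1/3}$. Up to an additive $O(\log t)$ term coming from the size of $\gamma_t$, this gives $h(\beta)\le \frac13\log|q|+O(\log t)$. The archimedean contribution is at most $\frac13\log|q|$ per conjugate, and there are no denominators because $x-\xi_ty$ is integral.

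\textbf{Siegel identity.} Let $'$ and $''$ denote the two other conjugates. The three numbers $x-\xi_t^{(i)}y$ satisfy the linear relation
\[(\xi_t'-\xi_t'')(x-\xi_ty)+(\xi_t''-\xi_t)(x-\xi_t'y)+(\xi_t-\xi_t')(x-\xi_t''y)=0.\]
This produces a unit equation of the form $a\,\beta\gamma_t^n+b\,\beta'{\gamma_t'}^n+c\,\beta''{\gamma_t''}^n=0$, where $a,b,c$ have height $O(\log t)$.

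\textbf{Main step.} Dividing by the last term gives a point $(\theta\eta^n,\theta'\eta'^n)$ on the line $X+Y=-1$, with $\eta=\gamma_t/\gamma_t''$ and so on, and with $\theta$ built from $\beta$ and $a,b,c$. Here we apply the sharp form of the main height bound of \cite{Am-Ma-Za}, Theorem~\ref{thm:AMZ-Thm1.4}, keeping its explicit dependence. For $|n|\geq n_0(\e)$ it should yield
\[h(\gamma_t)\le \frac{(1+\e)\,h(\beta)+O(\log t)}{|n|}\cdot\kappa,\]
with $\kappa$ a constant normalized so that the main terms match. The role of $\e$ is to replace the implied constants of \eqref{eq:gea} by $1+\e$ in front of $\log|q|$. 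The growing exponent $n$ makes the dependence of the gap principle on $n$ tend to the optimal constant. Combining this with $\mu\le h(x-\xi_ty)+O(\log t)\le h(\beta)+|n|h(\gamma_t)+O(\log t)$ and with $h(\beta)\le\frac13\log|q|+O(\log t)$, the $n$-terms cancel and we get
\[\mu\le (1+\e)\log|q|+C(\e)\log t.\]
If $|n|<n_0(\e)$, we get $\mu\le \frac13\log|q|+O(n_0\log t)$ directly. The case of negative $n$ is handled by replacing $\gamma$ with $\gamma^{-1}$.

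\textbf{Main obstacle.} The hard part is to obtain the constant $1+\e$ rather than an unspecified absolute constant in front of $\log|q|$. This requires the precise leading coefficient in the height inequality of \cite{Am-Ma-Za} as $n\to\infty$. It also requires a careful accounting of how $h(\beta)$, which is about $\frac13\log|q|$, enters after comparing $\mu$ with the heights of the three conjugates; that comparison is where the factor $3\cdot\frac13=1$ arises. To handle it, I would first write everything in terms of the archimedean absolute values of the single real embedding, where $\xi_t$ is real and the Puiseux expansions control all quantities.
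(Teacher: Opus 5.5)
Your architecture matches the paper's: fundamental domain via Lemma~\ref{lem:box}, the Siegel identity \eqref{eq:zerosum}, a case split on $n$, and a height bound from \cite{Am-Ma-Za} for large $n$. The difference is that the paper argues by contradiction from a large $|y|$, while you argue directly. There is, however, a genuine gap at exactly the step you flag as the main obstacle, and that step is where the whole proof lies.

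\textbf{The sharp inequality is missing.} Theorem~\ref{thm:AMZ-Thm1.4}, even ``keeping its explicit dependence'', does not give it. Its explicit form $h(\Pt)\le 3h(\alphag)/n+C$ bounds the height of the point. Passing to $h(\gammag_t)$ via Lemma~\ref{lem:AMZ-Lem3.3} multiplies by $D=-\deg\div(\gamma,\gamma',\gamma'')\ge1$. You would only get $n\,h(\gammag_t)\lesssim 3D\,h(\beta,\beta',\beta'')$, hence an exponent of $|q|$ of size roughly $\frac13+D$ rather than $1+\e$. Leaving $\kappa$ ``normalized so that the main terms match'' is circular: the theorem is precisely the assertion that $\kappa=2$ in the right normalization. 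The paper's input is the refined \cite[Theorem 4.1]{Am-Ma-Za} together with Remark 4.2 there. For $r=3$ and $n\ge K$ it gives
\[
\tfrac12\, h(\gammag_t)\le \tfrac1n\, h\big((\xi''_t-\xi'_t)\beta,(\xi_t-\xi''_t)\beta',(\xi'_t-\xi_t)\beta''\big)+O\big(\tfrac1K h(\gammag_t)+h(\gammag_t)^{1/2}+K\big).
\]
Here the leading constant is $\frac12$, and $h(\gammag_t)$ is the projective height of the vector of conjugates, not $h(\Pt)$. With this in hand, either route works. In the paper's route, the assumption $|y|\ge t^{c(K+3)}|q|^{1+\e}$ forces the lower bound \eqref{eq:nn} on $n$, hence $h(\gammag_t)$ and therefore $t$ are bounded. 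In your direct route, for $K$ and $t$ large the error terms can be absorbed, giving $n\,h(\gammag_t)\le 2(1+\e)\big(\frac13\log|q|+O(\log t)\big)$.

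\textbf{The height bookkeeping fails where constants matter.} The inequality $\mu\le h(x-\xi_ty)+O(\log t)$ is false for the absolute height of the single number $\alpha=x-\xi_ty$. When $|y|$ is large, $|\alpha'|=|\alpha''|\ge t^{-c}|y|$ while $|\alpha|=|q|/|\alpha'|^2$ is tiny. So $h(\alpha)\approx\frac23\log|y|$, and your inequality is off by a factor $\frac32$. Since the statement is about the sharp exponent, you must instead use three things:
\begin{itemize}
\item $\log|y|\le\log|\alpha'|+c\log t$ with $\alpha'=\beta'{\gamma'_t}^n$ and $n\ge0$ (after possibly inverting $\gamma$);
\item $\log|\beta'|\le\frac13\log|q|+O(\log t)$, from the proof of Lemma~\ref{lem:box};
\item $n\log|\gamma'_t|\le n\,h(\gammag_t)$, where $h(\gammag_t)=\log\max\{|\gamma_t|,|\gamma'_t|,|\gamma''_t|\}$.
\end{itemize}
The exponent $1$ then comes out as $\frac13+\frac23$. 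The $\frac13\log|q|$ comes from $\beta'$. The $\frac23\log|q|=2\cdot\frac13\log|q|$ bounds $n\,h(\gammag_t)$, where the $2$ is the reciprocal of the leading $\frac12$ above. Nothing in $n$ cancels; $n\,h(\gammag_t)$ is simply bounded by the sharp inequality.
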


The exponent of $|q|$ here is best possible - see section \ref{sec:irr} for more. The result covers not only $P=X^3-(T^3-1)$ as in \eqref{eq:ge}, but also the interesting example $P=X^3+TX-1$.\\

When the relevant group of units has rank greater than one, a direct use of \cite[Theorem 1.4]{Am-Ma-Za}  is not enough to get things like~\eqref{eq:gea}. A crucial ingredient for the proof of Theorem~\ref{thm:main} is the Theorem~\ref{thm:AMZbis} below.

In order to give a precise statement, we need first some relevant notations. Let $h$ be the Weil height on $\Gm^r(\Qb)\hookrightarrow\P_r(\Qb)$. We fix a projective smooth curve $\Cu$ defined over $\Qb$, with function field $\F=\Qb(\Cu)$. We choose a system of Weil functions associated to a divisor of degree $1$ and a corresponding height on $\Cu(\Qb)$, which we denote by the same letter $h$.  We consider a finitely generated subgroup $\Gamma\subset\Gm^r(\F)$ such that 
\begin{multline}
\label{ass:H2}
\hbox{whenever some quotient of coordinates of some element of $\Gamma$}\\ \hbox{is an algebraic number, it is a root of unity.}
\end{multline}

\begin{theorem}
\label{thm:AMZbis}
We fix $\theta_1,\ldots,\theta_r\in\F^*$. Then there exist $c',C',C''\geq1$ effectively depending only on $\F$, $\Gamma$ and $\theta_1,\ldots,\theta_r$ with the following property. Let $\alphag\in(\Qb^*)^r$ and $\Pt\in\Cu(\Qb)$. If for some $\gammag\in\Gamma$  the value $\gammag_\Pt$ is defined and satisfies
\begin{equation}
\label{eq:hgamma_t}
h(\gammag_\Pt)\geq \max\{C'(1+h(\Pt)),c'\, h(\alphag)\},
\end{equation}
and
\begin{equation}
\label{eq:hyperplane}
\alpha_1\theta_1(\Pt) \gamma_1(\Pt)+\cdots + \alpha_r\theta_r(\Pt)\gamma_r(\Pt)=0
\end{equation}
with no proper vanishing subsums, then the height of $\Pt$ is bounded by $C''$.
\end{theorem}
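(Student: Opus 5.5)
We reduce Theorem~\ref{thm:AMZbis} to the specialization/height theorem of~\cite{Am-Ma-Za} (Theorem~1.4 there, reproduced later as Theorem~\ref{thm:AMZ-Thm1.4}). That result says roughly: if $\gammag\in\Gamma$ satisfies~\eqref{ass:H2}, and a point $\Pt$ makes $\gammag_\Pt$ lie on a fixed proper subvariety after a twist by algebraic numbers of small height, then $h(\Pt)$ is bounded by a quantity of order $h(\alphag)/h(\gammag)$ plus a constant.

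The first step is to put~\eqref{eq:hyperplane} in projective form. Dividing by the last term, set $\beta_i=\alpha_i/\alpha_r$ and $\varphi_i=\theta_i/\theta_r$, so that $\sum_{i<r}\beta_i\varphi_i(\Pt)\,(\gamma_i/\gamma_r)(\Pt)=-1$. The point $\Pt$ then gives a solution of a unit-type equation with coefficients of height at most $2h(\alphag)$. Since we work in $\Gm^r\hookrightarrow\P_r$, only the quotients $\gamma_i/\gamma_r$ matter, and~\eqref{ass:H2} says exactly that these generate a group with no nontrivial constant elements except torsion. Replacing $\Gamma$ by its image in $\Gm^{r-1}$ under quotienting, we may assume $\Gamma$ is a finitely generated subgroup of $\Gm^{r-1}(\F)$ whose nontorsion elements have coordinates that are all nonconstant or multiplicatively ``free''.

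Next we decompose $\gammag=\gammag_0^{\,n}\cdot\ldots$ along a basis of $\Gamma$. Write $\gammag=\prod_k \etag_k^{m_k}$ with $\etag_k$ a fixed basis. The height of the function $\gammag$ on $\Cu$ is $\asymp\max|m_k|=:M$, and by Silverman/N\'eron specialization with effective error terms, $h(\gammag_\Pt)=\widehat h(\gammag)h(\Pt)+O(M(1+h(\Pt))^{1/2}+M)$. Hypothesis~\eqref{eq:hgamma_t} with $C'$ large then gives $M(1+h(\Pt))\gg h(\gammag_\Pt)\ge c'h(\alphag)$. So relative to $M$, the coefficients $\beta_i\varphi_i(\Pt)$ have height $\le (1/c')M(1+h(\Pt))+O(1+h(\Pt))$.

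The key step is to apply Theorem~1.4 of~\cite{Am-Ma-Za} to the point $\gammag_\Pt$ lying on the twisted hyperplane $\{\sum\beta_i\varphi_i(\Pt)y_i=-1\}$. Because there is no vanishing subsum, this hyperplane meets the torus coset of $\gammag_\Pt$ in a non-degenerate way, which is the ``anomalous-free'' situation of that theorem. The theorem then yields
\[
h(\Pt)\le C_1\Big(\frac{h(\alphag)+h(\varphi(\Pt))}{M}+1\Big)\le C_1\Big(\frac{1}{c'}(1+h(\Pt))+\frac{C_2(1+h(\Pt))}{M}+1\Big).
\]
If $M$ exceeds a constant, which~\eqref{eq:hgamma_t} forces when $C'$ is large, and $c'$ is chosen large, the terms containing $h(\Pt)$ on the right are absorbed into the left, giving $h(\Pt)\le C''$.

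The main obstacle is this key step. We must check that Theorem~1.4 of~\cite{Am-Ma-Za} applies uniformly to a hyperplane whose coefficients vary with $\Pt$ and $\alphag$, and that the implied constants depend only on $\F,\Gamma,\theta_i$. The first thing to try is to treat $(\alphag,\Pt)$ as a point on an enlarged variety $\Cu\times\Gm^{r}$, and to use induction on $r$ to handle the degenerate subsums that could appear in the proof. Here the ``no proper vanishing subsum'' hypothesis is what keeps us in the generic stratum.
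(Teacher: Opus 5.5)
Your key step is not available as stated, and it is where the whole content of the proof lies. Theorem~\ref{thm:AMZ-Thm1.4} does not concern an arbitrary $\gammag\in\Gamma$. It treats $\alpha_1f_1(\Pt)^n+\cdots+\alpha_rf_r(\Pt)^n=0$ for one \emph{fixed} tuple $(f_1,\ldots,f_r)$ with $n$ large, and its constant $C$ depends on that tuple. When $\Gamma$ has rank $\kappa\geq 2$, a general $\gammag=\omegag\gb_1^{\lambda_1}\cdots\gb_\kappa^{\lambda_\kappa}$ is not an $n$-th power of an element of any fixed finite set. So the uniform inequality $h(\Pt)\leq C_1\big((h(\alphag)+h(\varphi(\Pt)))/M+1\big)$ that you attribute to the theorem is essentially the statement you are trying to prove. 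You flag this yourself as the ``main obstacle'', but the remedy you suggest (passing to $\Cu\times\Gm^r$, induction on $r$) misses the actual difficulty. Varying coefficients are harmless, since Theorem~\ref{thm:AMZ-Thm1.4} already allows arbitrary $\alphag\in\P^{r-1}(\Qb)$. The problem is the varying direction of $\gammag$ inside $\Gamma$.

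The missing idea is Dirichlet simultaneous approximation of $(\lambda_1,\ldots,\lambda_\kappa)/A$, with $A=\tfrac12 Q\max|\lambda_j|$, an auxiliary parameter $Q$, and denominator $q<Q^\kappa$.
\begin{itemize}
\item With $n=\lceil A/q\rceil$ this gives $\lambda_j=np_j+r_j$, where $|p_j|\leq 2Q^{\kappa-1}$ and $|r_j|\leq n/Q+2Q^{\kappa-1}$.
\item Hence $\gammag=\omegag\rhog\fb^n$ with $\fb=\prod\gb_j^{p_j}$ in a \emph{finite} set depending only on $Q$.
\item The factors $\omega_i\theta_i(\Pt)\rho_i(\Pt)$ are absorbed into new coefficients $\alpha'_i$ satisfying $h(\alphag')\leq c_2(n/Q+Q^{\kappa-1})h(\Pt)+h(\alphag)$.
\item You must also check that $\fb$ is non-torsion. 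This holds because $p_{j_0}\neq 0$ for the index with $|\lambda_{j_0}|=2A/Q$, and then~\eqref{ass:H2} supplies the non-constant $f_i/f_1$ that Theorem~\ref{thm:AMZ-Thm1.4} requires.
\end{itemize}
That theorem then gives $h(\Pt)\leq r\,h(\alphag')/n+C_Q$. Crucially, the multiplicative constant is exactly $r$, independent of $\fb$ and hence of $Q$; only the additive constant $C_Q$ depends on $Q$. This is what lets the constants be fixed in order without circularity:
\begin{itemize}
\item first fix $Q=\lceil 8rc_2\rceil$, so that $rc_2/Q\leq\tfrac18$;
\item then take $C'$ large in terms of $C_Q$ and $Q$, so that hypothesis~\eqref{eq:hgamma_t} forces $n\geq\max\{C_Q,Q^\kappa\}$;
\item finally take $c'=4rc_3Q^{\kappa-1}$, so that $r\,h(\alphag)/n\leq\tfrac14(1+h(\Pt))$.
\end{itemize}
With a multiplicative constant $C_1$ that depends on the tuple, as in your sketch, this absorption of the $h(\Pt)$ terms would be circular.
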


In Theorems~\ref{thm:main} and~\ref{thm:AMZbis} we have deliberately distinguished between lower case and upper case constants. It seems to us that those in lower case are rather easier to compute explicitly than those in upper case.\\

The plan of this paper is as follows. In section~\ref{sec:ex} we discuss several examples. %In section~\ref{sec:aux} we present two lemmas on heights needed for the proof of Theorem~\ref{thm:main}. 
In section~\ref{sec:aux} we present two lemmas on heights needed for the proofs of the main theorems. In Section~\ref{sec:main-cubic} we derive Theorem~\ref{thm:main-cubic} from the main result of~\cite{Am-Ma-Za}, by the method sketched to treat~\eqref{eq:ge}. Section~\ref{sec:Am-Ma-Za-bis} is devoted to the proof of Theorem~\ref{thm:AMZbis}. Then, in section~\ref{sec:Thomas-bis} we prove Theorem~\ref{thm:main}. After that section~\ref{sec:ex2} provides a detailed discussion of our various examples. And in section~\ref{sec:ss} we focus on ``small solutions" with particular emphasis on the equation $x^3-(t^3-1)y^3=q$. Then in the short section~\ref{sec:irr} we consider some of our results from the closely related viewpoint of diophantine approximation.  Finally in the even shorter appendix \ref{sec:app} we indicate how some details of \cite{Am-Ma-Za2} should be modified to accommodate the changeover from \cite[Assumption 2.1]{Am-Ma-Za2} to Assumption~\ref{ass:Thomas-H}.

As already remarked in \cite{Za} (p.352), one may hope to obtain some sort of uniformity in the coefficients of $P(T,X)$, for example in $x^3-(t^3-a)y^3=q$ for small $a$, generalizing (\ref{eq:ge}).

\section{examples}
\label{sec:ex}
Here are some examples, all for prime $d$ to avoid intermediate fields, except $d=9$. In all these examples the units become immediately apparent on factorising $P+1$ or $P-1$.

For $d=3$ we already considered $P(T,X)=X^3-(T^3-1)$. We may in passing mention also
\begin{equation}\label{XT}
X^3+TX-1,~~~X^3-T^2X-1,~~~X(X-1)(X-T)-1
\end{equation}
which seem not to have been treated for general $q$ (Wakabayashi \cite{wak} does the second for $q=1$, and Lee \cite{lee} and Mignotte-Tzanakis \cite{mat} the third for $q=1$ - see also Thomas \cite{tho} equation (1.5)ii). Thus for example all solutions of 
\begin{equation}\label{cub}
x^3+txy^2-y^3=q
\end{equation}
have 
\begin{equation}\label{cubest}
\max\{|x|,|y|\} \leq t^C|q|^c
\end{equation}
for large $t$. We could not find such a simple result in the literature (let alone the sharp version with any $c>1$ coming from Theorem \ref{thm:main-cubic}). As remarked in \cite{Am-Ma-Za2} (p.902) we can cover small $t$ using linear forms in logarithms.

On the other hand it seems unlikely that one can prove (\ref{cubest}) using only linear forms in logarithms. It is true that the relevant regulator is a lot smaller than usual, but even taking this into account as for example in Bugeaud \cite{bug} Th\'eor\`eme 3 (p.231) gives something like $(3t|q|)^{c\log t}$ in (\ref{cubest}).

It may be worth remembering that no-one knows if for example $x^3-ty^3=1$ entails $\max\{|x|,|y|\} \leq t^C$, even for $t$ large (by well-known results - see for example Bennett~\cite{Bennett2} - there can be at most one solution with $y \neq 0$). Of course we have no suitable functional units, and linear forms in logarithms usually lead only to exponential polynomial dependence. 

On the third hand a well-known conjecture (almost equally well-known to be still open) implies that this holds for any $C > 1$.

For $d=5$ Marzenta \cite{mar} has treated (after earlier work of Lombardo \cite{lom})
\begin{equation}
\label{1}
P(T,X)=X^5+4T^4X-1
\end{equation}
also for $q=1$, and even in four variables with $x_4=0$. The Assumption \ref{ass:Thomas-H} as well as the crucial~\eqref{eq:index} is deduced from a result of Maus with the help of techniques from \cite{Am-Ma-Za2}. So now we can treat general $q$ using Theorem \ref{thm:main} and we no longer need~\eqref{eq:index}. Here $\varrho=2$.

The result of Maus is proved also for
\begin{equation}
\label{2}
P(T,X)=X^5-T^4X-1
\end{equation}
where now $\varrho=3$. Again one can check Assumption \ref{ass:Thomas-H}, and probably also~\eqref{eq:index}; but again the latter is no longer needed. So here too our Theorem \ref{thm:main} applies. Thus for $t$ large we have
$$\max\{|x|,|y|,|z|\} \leq t^C|q|^c$$
for all integers $x,y,z$ with
$$x^5+y^5+z^5-t^4(2x^3z^2-4x^2y^2z+xy^4+yz^4)+t^8xz^4 ~=~q.$$
This may be compared not unfavourably with a celebrated result of Baker \cite{baq} for the equation
\begin{equation}\label{bq}
x^5+(t^5-1)y^5+(t^5-1)^2z^5+5(t^5-1)xyz(xz-y^2)~=~q
\end{equation}
and his explicit estimates
$$\max\{|x|,|y|,|z|\}  \leq t^{2500}q^2$$
for $t > 10^{11}$ (best possible would be $|q|^\lambda$ with any $\lambda>1/3$).

Heuberger \cite{heu} has shown for
\begin{equation}\label{2a}
P(T,X)=X(X^2-1)(X^2-T^2)-1
\end{equation}
that the corresponding binary equation
\begin{equation}\label{hq}
x(x^2-y^2)(x^2-t^2y^2)-y^5=1
\end{equation}
has only the solutions
$$(x,y)~=~(1,0),(0,-1),(1,-1),(-1,-1),(t,-1),(-t,-1)$$
provided $t \geq 36.10^{18}$. He used linear forms in logarithms and was able to calculate the left-hand side of~\eqref{eq:index}. Here $\varrho=4$. Our Theorem \ref{thm:main} applies with general $q$, even in four variables.

Similar conclusions hold for the closely related examples
\begin{multline}
\label{2aa}
X(X^2+1)(X^2-T^2)-1,~~X(X^2-1)(X^2+T^2)-1,\\~~X(X^2+1)(X^2+T^2)-1
\end{multline}
which do not seem to have been considered before.

For $d=7$ we do not know any natural examples in the literature, even with $q=1$. But (\ref{1}) is vaguely connected with the name Aurifeuille, and this led us to
\begin{equation}\label{3}
P(T,X)=X^7+27T^6X-1
\end{equation}
also with $\varrho=3$. Here Assumption \ref{ass:Thomas-H} is not too difficult to check. But now we do not know~\eqref{eq:index}. The calculations of Maus even for (\ref{1}) are substantial,  with one real embedding (and $\varrho_1=1$) and four non-real (and $2\varrho_2=4$), and those for (\ref{2}) are even more complicated with three real and two non-real; and in fact Maus does not give the details because of ``lack of space'' \cite{mau} (p.248).

Nevertheless we can apply Theorem \ref{thm:main} in two variables to
\begin{equation}\label{sep}
x^7+27t^6xy^6-y^7=q
\end{equation}
and in three variables to
\begin{multline*}
x^7+y^7+z^7-7xyz(xz-y^2)^2\\-27t^6(2x^4z^3-9x^3y^2z^2+6x^2y^4z-xy^6-yz^6)+729t^{12}xz^6=q.
\end{multline*}
Here we could in principle do four, five, or even six variables. But the resulting diophantine equations are inconveniently long to display; in the last case there are 343 terms. 

A septic analogue of (\ref{2}) is
\begin{equation}\label{4}
P(T,X)=X^7-T^6X-1. 
\end{equation}
But now $\varrho_1=3$ and $2\varrho_2=4$ so $\varrho=4$ and so it seems even more unlikely that~\eqref{eq:index} can be proved; nevertheless Assumption \ref{ass:Thomas-H} can. So Theorem \ref{thm:main} applies.

We can very probably handle septic analogues of (\ref{2a}),(\ref{2aa}) such as $X(X^4-1)(X^2-T^2)-1$ (with $\varrho=5$) and so on; but we skip over these to discuss higher degree $d$.

Analogues of (\ref{2}) and (\ref{4}) for $d=9,11,13,\ldots$ exist; but it seems that not enough functional units are available as required in part (1) of Assumption \ref{ass:Thomas-H}. The same holds for the ``Baker Quintic'' $X^5-(T^5-1)$ which is behind (\ref{bq}) above. But after (\ref{2a}) we found the nonic
\begin{equation}\label{4a}
P(T,X)=X(X^4-1)(X^4-T^4)-1
\end{equation}
with $\varrho=6$. It turns out that Theorem \ref{thm:main} does apply, even though the degree is not prime. So we can handle eight variables. But the resulting diophantine equation is far too long for Maple 2025 to display.

However to give an idea we calculated the formal norm of 
$$X_0+\xi_tX_1+\xi_t^2X_2+\xi_t^3X_3+\xi_t^4X_4+\xi_t^5X_5+\xi_t^6X_6+\xi_t^7X_7$$
as
\begin{multline*}
N(X_0,X_1,X_2,X_3,X_4,X_5,X_6,X_7)\\ =N_0+N_1t^4+N_2t^8+N_3t^{12}+N_4t^{16}+N_5t^{20}+N_6t^{24}+N_7t^{28}
\end{multline*}
where $N_0=X_0^9+X_1^9+X_2^9+X_3^9+X_4^9+X_5^9+X_6^9+X_7^9+\cdots$ (independent of $t$) up to terms of degree less than 9 in each variable, and $N_7=-X_0X_7^4LL'Q$ with
$$L=\sum_{i=0}^7X_i,~~L'=\sum_{i=0}^7(-1)^iX_i,~~Q=\sum_{j=0}^7\sum_{k=0}^7(-1)^{(k-j)/2}X_jX_k,$$
the double sum being restricted to $j,k$ of the same parity.

Thus we have

\begin{proposition} There is effective $t_0$ such that if $t \geq t_0$ then all solutions of
$$N(x_0,x_1,x_2,x_3,x_4,x_5,x_6,x_7)=q$$
in integers $x_0,x_1,x_2,x_3,x_4,x_5,x_6,x_7$ satisfy
\begin{equation}\label{nonest}
\max\{|x_0|,|x_1|,|x_2|,|x_3|,|x_4|,|x_5|,|x_6|,|x_7|\} \leq t^C|q|^c
\end{equation}
for some effective $C,c$.
\end{proposition}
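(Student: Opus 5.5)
The plan is to deduce the Proposition directly from Theorem~\ref{thm:main}, applied to $P(T,X)=X(X^4-1)(X^4-T^4)-1$ with $d=9$ and $W=\{x_8=0\}$. This $W$ is a proper linear subvariety of $\A^9$ defined over $\Q$. By construction $N(x_0,\ldots,x_7)$ is the norm of $x_0+x_1\xi_t+\cdots+x_7\xi_t^7$, so each integer solution of $N=q$ gives a point of $W_t(\Z)$ satisfying~\eqref{eq:norm-spe}. Then~\eqref{eq:bound-sol} is exactly~\eqref{nonest}. It therefore remains to check the hypotheses of Theorem~\ref{thm:main}: irreducibility and monicity of $P$ (monicity is clear), odd degree ($d=9$), primitivity of $\Q(T,\xi)/\Q(T)$, and Assumption~\ref{ass:Thomas-H}.

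\textbf{Step 1: units and signature.} The units come from $P+1=X(X-1)(X+1)(X^2+1)(X-T)(X+T)(X^2+T^2)$. Since $P(T,\xi)=0$, the product of $\xi,\xi\pm1,\xi^2+1,\xi\pm T,\xi^2+T^2$ equals $1$, so each factor is a unit of $\Z[T,\xi]$. To get $\varrho$, I would use Puiseux series at $T=\infty$. There are eight roots close to $0,\pm1,\pm i,\pm T,\pm iT$, and a ninth large root of size about $T^{8/9}$ coming from the balance $X^9\approx$ const. Counting these, five roots are real, and the nine roots split as four complex pairs plus one real, giving $\varrho_1+\varrho_2-1=6$ as the paper states. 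For part (1) of Assumption~\ref{ass:Thomas-H}, I would pick six of the units above and prove multiplicative independence. The method is to compute their orders along the nine Puiseux branches, i.e.\ the valuations at $T=\infty$ in each embedding, and check that the resulting $6\times9$ matrix has rank $6$.

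\textbf{Step 2: part (2) of the assumption.} Suppose $v=\prod\sigma_j(u)^{m_j}$ is algebraic. Comparing orders at $T=\infty$ along all branches in the Galois closure forces the exponent vector to be degenerate. The cleanest route is via primitivity: the Galois group is then large enough that the only relation among the valuation vectors is $\sum_j\sigma_j(u)$ being a norm, and the norm of a unit is $\pm1$. So $v$ is a root of unity. I would model this on the corresponding verification for~\eqref{2a} in \cite{Am-Ma-Za2}.

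\textbf{Step 3: primitivity, the expected main obstacle.} A nontrivial subextension would have to be of degree $3$, since $9$ has divisors $1,3,9$. So I must exclude a block system of three blocks of size three for the Galois group of $P$ over $\Q(T)$. I would try specialization first. If some prime $p$ and integer $t$ make $P(t,X)$ mod $p$ factor with cycle type incompatible with imprimitivity, say irreducible of degree $9$ together with a type like $(7,1,1)$ or $(2,7)$, then the group contains a $7$-cycle. A transitive group on $9$ points containing a $7$-cycle is primitive, since a $7$-cycle cannot preserve blocks of size $3$. Equally, the inertia at $T=\infty$ already gives the $9$-cycle coming from $T^{8/9}$? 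No: the Puiseux data give a one-point orbit for the large root? Actually that root has ramification index $9$ only if the ninth branch is conjugate to the others, which it is not. So I would rely on the specialization computation. Alternatively, a $(2,7)$ inertia type at a finite branch point, read off from the discriminant, would also contain a $7$-cycle. Once this is established, Theorem~\ref{thm:main} gives effective $t_0,C,c$ and the Proposition follows.
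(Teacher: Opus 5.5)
Your reduction is the paper's: apply Theorem~\ref{thm:main} to $P=X(X^4-1)(X^4-T^4)-1$ with $d=9$ and $W=\{x_8=0\}$. The verification of the hypotheses, however, has a real gap in Step~2.

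\textbf{Step 2.} Primitivity does not imply that the only multiplicative relation among the conjugates $\sigma_j(u)$ modulo constants is the norm. The relation vectors $(m_j)$ form a $\Q[G]$-submodule of the permutation module $\Q^9=\Q\mathbf{1}\oplus V$ that contains $\mathbf{1}$. Your conclusion requires $V$ to be irreducible over $\Q$. That follows from $2$-transitivity, but not from primitivity: for the primitive affine group $3^2{:}4$ of degree $9$ (rank $3$), $V$ splits over $\Q$ into two $4$-dimensional summands.

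Here the step can be rescued, because the group is in fact $S_9$. The paper proves this by Dedekind's criterion on $P(0,X)=X^9-X^5-1$: it is irreducible mod $2$, gives an $8$-cycle mod $7$, and a transposition times a $5$-cycle mod $17$. Alternatively, a $7$-cycle in a primitive group of degree $9$ gives $3$-transitivity by Jordan's theorem. You must also treat constant $u$. These are $\pm1$, since $\Q$ is algebraically closed in $\Q(T,\xi)$ (there is a branch with rational coefficients).

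The paper avoids group theory in part~(2) entirely. All nine branches at $T=\infty$ are Laurent series over $\Q(\sqrt{-1})$. So a constant $v$ in the group generated by the conjugates of $u$ lies in $\Z[\sqrt{-1}]^*$ and is a root of unity (Lemma~3.5 of \cite{Am-Ma-Za2}).

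\textbf{Step 1.} Your description of the roots is wrong. The nine points $0,\pm1,\pm i,\pm T,\pm iT$ are all the roots of $P+1$. Each root of $P$ is a Laurent series in $1/T$ near one of them, e.g.\ $T^{-4}+\cdots$, $1-\tfrac14T^{-4}+\cdots$, $T+\tfrac14T^{-8}+\cdots$. There is no root of size $T^{8/9}$. The real roots are those near $0,\pm1,\pm T$, so $\varrho_1=5$ and $\varrho_2=2$; your alternative ``four complex pairs plus one real'' would give $\varrho=4$.

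With the correct branches, your valuation method for part~(1) works, and it is quicker than the paper's single-branch argument with formal logarithms. Take the units $\xi,\xi+1,\xi-1,\xi+T,\xi-T,\xi^2+1$ with exponents $m_1,\dots,m_6$. The branches at $0,1,-1,i$ give $4m_1=4m_3=4m_2=4m_6=m_4+m_5$. Comparing the branches at $T$ and $-T$ with the one at $iT$ gives $9m_5=9m_4=0$, so all $m_j$ vanish.

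\textbf{Step 3.} Your criterion (a transitive group containing a $7$-cycle is primitive) is correct, and lighter than identifying the full group. But you never produce a prime and specialization giving the $7$-cycle, so primitivity is not yet established. The paper's explicit data give $S_9$, which settles Steps~2 and~3 at once.
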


Most of these examples $P(T,X)$ are special cases of polynomials defining the Ankeny-Brauer-Chowla ``ABC" fields mentioned in \cite{Am-Ma-Za2} (Theorem \ref{ass:Thomas-H} and p.903). These have the shape
\begin{equation}\label{ABC}
(X-A_1)\cdots(X-A_m)(X^2+B_1X+C_1)\cdots(X^2+B_nX+C_n)+1
\end{equation}
with $A_1,\ldots,A_m,B_1,C_1,\ldots,B_n,C_n$ in ${\Z}[T]$ such that $B_i^2-4C_i<0$ at all sufficiently large $T=t>0$. One can also change the sign of the last term but that is no more general so we stick with (\ref{ABC}) in order to agree with \cite{Am-Ma-Za2}. The fields are constructed to have a full set of functional units, so they are good candidates for Theorem \ref{thm:main}, at least if $m$ is odd. But it may not be easy to rule out intermediate fields or verify Assumption \ref{ass:Thomas-H}.

In \cite{Am-Ma-Za2} we treated $m=3,n=0$ with $q=1$. But Theorem \ref{thm:main} applies with general $q$, so to the equation
\begin{equation}\label{abc}
(x-A(t)y)(x-B(t)y)(x-C(t)y)+y^3~=~q
\end{equation}
for distinct $A,B,C$ in ${\Z}[T]$. It may possibly apply to
\begin{equation}\label{abcde}
(x-A(t)y)(x-B(t)y)(x-C(t)y)(x-D(t)y)(x-E(t)y)+y^5~=~q
\end{equation}
(even with $z$ thrown in), but so far we have not been able to check Assumption \ref{ass:Thomas-H} part 1.

We have referred to linear forms in logarithms regarding (\ref{cub}) and (\ref{hq}); and indeed these can probably be used similarly to handle Thue equations effectively as in (\ref{sep}). As already mentioned, our functional units mean that the relevant regulators do not explode as expected (see \cite{Am-Ma-Za2} p.910). Crucial seems to be the fact that only two variables appear (in general the space $W$ in Theorem \ref{thm:main} should have dimension at most 2).

As Marzenta \cite{mar} (p.289) points out, there is some work in the literature which leads to the possibility of effectively treating more than two variables using linear forms in logarithms. But in any case none of this work covers six variables. Theorem 2 of Bajpai \cite{baj} (p.1272) can handle five variables, but is restricted to even degree $d$ (with signature conditions). And Bajpai-Bennett \cite{bab} Theorem 1 (p.422) is restricted to $d \leq 6$ (also with signature conditions), while Gy\"ory \cite{gyo} Theorem 6 (p.164) also needs even $d$ (because of a CM field condition). Also Gy\"ory-Lov\'asz \cite{gal} Theorem 1 (p.174) is restricted to three variables (also with a CM condition).

In any case we already remarked that linear forms in logarithms do not seem to give polynomial bounds like (\ref{cubest}) or (\ref{nonest}).

\section{Auxiliary Results}
\label{sec:aux}
%\subsection{Heights and Geometry of Numbers}
%\label{subs:heights}
Given $\alpha_1,\ldots,\alpha_d\in\Qb$ we denote by $h(\alphag)$ the normalized, logarithmic Weil height of $\alphag=(\alpha_1,\ldots,\alpha_d)\in\Qb^d$, which we identify with the corresponding point in $\P_{d-1}(\Qb)$. For an algebraic number $\alpha$ we put $h(\alpha)=h(1,\alpha)$. We also adopt the following:
\begin{convention}
\label{convention}
Let $k$ be a fixed number field of degree $d$. We fix an order $\sigma_1,\ldots,\sigma_d$ of the $\Q$-immersions of $k$ in $\C$.
For $\alpha\in k$ we then denote $\alpha_j=\sigma_j(\alpha)$ and $\alphag=(\alpha_1,\ldots,\alpha_d)\in\P_{d-1}(\Qb)$.
\end{convention}

The following result is more or less implicit in the literature (see for example Baker~\cite{bac}  p.188).
\begin{lemma}
\label{lem:box}
Let $k$ be a number field of degree $d$ with group of units $\O_k^*$ of rank $\varrho$. Let $u_1,\ldots,u_\varrho$ be a system of multiplicatively independent units of~$\O_k^*$. Then for any $\alpha\in\O_K$ there exists a unit $\eta$ in the subgroup generated by $u_1,\ldots,u_\varrho$ such that 
$$
h(\etag^{-1}\alphag)\leq\tfrac1d\log\vert N^k_\Q(\alpha)\vert+h(\ug_1)+\cdots+h(\ug_\varrho).
$$
%where for $\theta\in k$ we have denoted $\thetag=(\sigma_1\theta,\cdots,\sigma_d\theta)$, with $\sigma_1,\ldots,\sigma_d$ the $\Q$-immersions of $k$ in $\C$.
\end{lemma}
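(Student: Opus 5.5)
We work in logarithmic coordinates via the map $L:k^*\to\R^{\varrho_1+\varrho_2}$, $L(\beta)=(d_v\log|\sigma_v\beta|)_v$, where $v$ runs over the archimedean places of $k$ and $d_v\in\{1,2\}$ is the local degree. By Dirichlet's theorem the vectors $L(u_1),\ldots,L(u_\varrho)$ are linearly independent in the hyperplane $H=\{\sum_v y_v=0\}$, which has dimension $\varrho=\varrho_1+\varrho_2-1$; hence they form a basis of $H$. Given $\alpha\in\O_k$ with $\alpha\neq0$, set $\ell=\frac1d\log|N^k_\Q(\alpha)|$ and decompose
$$
L(\alpha)=\ell\,(d_v)_v+\sum_{i=1}^{\varrho} s_i L(u_i),\qquad s_i\in\R ,
$$
which is possible because $\sum_v d_v\ell=\log|N(\alpha)|=\sum_v L(\alpha)_v$. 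Let $n_i$ be the nearest integer to $s_i$ and put $\eta=\prod u_i^{n_i}$. Then
$$
L(\eta^{-1}\alpha)=\ell\,(d_v)_v+\sum_i \delta_i L(u_i),\qquad |\delta_i|\le\tfrac12 .
$$

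Next we compute the height. Since $\eta^{-1}\alpha$ is again an algebraic integer, its finite places contribute nothing to the projective height of the vector $(\sigma_1(\eta^{-1}\alpha),\ldots,\sigma_d(\eta^{-1}\alpha))$ beyond $0$. More simply, we bound the projective height directly by
$$
h(\etag^{-1}\alphag)\le \log\max_j|\sigma_j(\eta^{-1}\alpha)|-\frac1d\sum_v d_v\log\min_j|\ldots| ,
$$
or, cleaner still, we use that for $\beta=\eta^{-1}\alpha$ with all conjugates nonzero,
$$
h(\betag)\le \max_j\log|\beta_j|-\min_j\log|\beta_j| .
$$
(This holds at archimedean places of the field generated by the conjugates; at finite places the ratio bound comes from $\beta_j/\beta_1$ type quantities and needs care.) Instead, we take the more robust route: we write the height as the sum of local contributions $\frac1D\sum_w\log\max_j|\beta_j|_w$ over the Galois closure of degree $D$. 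Each finite-place contribution is $\le0$ because the $\beta_j$ are integers. Each archimedean contribution is $\max_j\log|\beta_j|$, which is the same for every place, since the set of conjugates is permuted. Hence
$$
h(\betag)\le\max_j\log|\beta_j|=\max_v L(\beta)_v/d_v .
$$

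Finally, we estimate this maximum. The coordinate is $\ell+\sum_i\delta_i L(u_i)_v/d_v\le \ell+\frac12\sum_i\max_v|\log|\sigma_v u_i||$. For a unit $u$ we have $\sum_v d_v\log^+|\sigma_v u|=\sum_v d_v\log^-|\ldots|$, both equal to $d\,h(u)$. Thus $\max_v|\log|\sigma_v u||\le d\,h(u)$, which gives only $\frac d2 h(u_i)$. The claimed constant is $h(\ug_i)$ with the same reasoning applied to the vector $\ug_i$: $h(\ug_i)=\max_j\log|u_{i,j}|\ge \max_v\log|\sigma_vu_i|$, and likewise $h(\ug_i^{-1})=h(\ug_i)$ up to the same argument applied to $u_i^{-1}$ (since $\ug_i^{-1}$ is again a vector of integer conjugates, and projectively $h(\ug^{-1})$ equals $-\min_j\log|u_j|\cdot$ by the same local computation). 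Hence $|\log|\sigma_v u_i||\le h(\ug_i)$, and with $|\delta_i|\le\frac12$ we obtain even the factor $\frac12$, so the lemma follows. The main obstacle is exactly this bookkeeping of the projective height: showing that finite places contribute nothing and that $h(\ug^{-1})=h(\ug)$. The key point for the latter is that both equal $\max_j\log|u_j|$ and $-\min_j\log|u_j|$ respectively in the projective normalization, together with the equality $h(\ug^{-1})=h(\ug)$ coming from the fact that $\prod_j u_j=\pm1$, which makes the projective point $\ug^{-1}$ a rescaling of the vector of products of $d-1$ conjugates.
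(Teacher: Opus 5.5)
There is a genuine gap in your final step, and it is fixable. Your setup is the paper's: the logarithmic embedding, splitting off $\tfrac1d\log|N^k_\Q(\alpha)|$, and reducing the trace-zero part modulo the lattice spanned by the $L(u_i)$. Your height computation is also correct: $h(\etag^{-1}\alphag)\le\max_j\log|\sigma_j(\eta^{-1}\alpha)|$, since finite places contribute $\le 0$ and every archimedean place of the Galois closure gives the same maximum.

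\textbf{The gap.} The claimed equality $h(\ug^{-1})=h(\ug)$ for a unit $u$ is false. You correctly have $h(\ug)=\max_j\log|u_j|$ and $h(\ug^{-1})=-\min_j\log|u_j|$. But $\prod_j u_j=\pm1$ only gives $-\min_j\log|u_j|=\sum_{j\neq j_0}\log|u_j|\le (d-1)h(\ug)$, not equality. For example, $u=\sqrt[3]{2}-1$ is a unit of $\Q(\sqrt[3]{2})$ whose non-real conjugates have absolute value $|u|^{-1/2}$, so $h(\ug^{-1})=2h(\ug)$. This is exactly why the paper writes $h(\gammag_t^{-1})\le 2h(\gammag_t)$ in the proof of Theorem~\ref{thm:main-cubic}.

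Hence $|\log|\sigma_v u_i||\le h(\ug_i)$ fails. Because nearest-integer rounding allows $\delta_i<0$, a term $\delta_i\log|\sigma_v u_i|$ can reach $\tfrac12 h(\ug_i^{-1})$. Your argument therefore only yields $\tfrac1d\log|N^k_\Q(\alpha)|+\tfrac{d-1}{2}\sum_i h(\ug_i)$, which falls short of the lemma once $d\ge 4$. The apparent gain of a factor $\tfrac12$ was a symptom of this error.

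\textbf{The repair.} Do what the paper does with its half-open fundamental parallelogram $\{\sum_i\lambda_i\L(u_i): 0\le\lambda_i<1\}$. Take $n_i=\lfloor s_i\rfloor$, so that $\delta_i\in[0,1)$. You only need an upper bound for $\max_v\log|\sigma_v(\eta^{-1}\alpha)|$. With $\delta_i\ge 0$, the one-sided inequality $\log|\sigma_v u_i|\le\max_w\log|\sigma_w u_i|=h(\ug_i)$ suffices. This gives $\delta_i\log|\sigma_v u_i|\le\delta_i h(\ug_i)\le h(\ug_i)$, and the lemma follows with constant exactly $1$.

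The paper phrases this step with $\Vert\cdot\Vert_\infty$. Read literally, that would need the same false two-sided bound; what actually makes its argument work is the sign $\lambda_i\ge 0$.
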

\begin{proof}
We fix an arbitrary order on the set of $\varrho+1$ archimedean places $\{v\mid\infty\}$ of $k$. This allows us to use this set as a set of indexes. Instead of the usual logarithmic immersion $\L\;\colon K^*\rightarrow\R^{\varrho+1}$ defined by $\L(\theta)=(d_v\log\vert\theta\vert_v)_{v\mid\infty}$, with $d_v=1$ if $v$ is real and $d_v=2$ otherwise, we take here 
$$
\L(\theta)=(\log\vert\theta\vert_v)_{v\mid\infty}.
$$
Thus for $\theta\in\O_k$ we have $h(\thetag)\leq\Vert\L(\theta)\Vert$, where $\Vert\star\Vert$ denotes the sup norm. 
%Thus $\L(\O_k^*)\subseteq H$ with $H$ the hyperplan $\{\xg\in\R^{r+1},\; \sum_v d_vx_v=0\}$. 
By assumption $\L(u_1),\ldots,\L(u_\varrho)$ generate a lattice of the hyperplan $\{\xg\in\R^{\varrho+1},\; \sum_v d_vx_v=0\}$, with fundamental parallelogram
$$
P=\{\lambda_1\L(u_1)+\cdots+\lambda_\varrho\L(u_\varrho)\;\vert\;
0\leq\lambda_1,\ldots,\lambda_\varrho<1\}.
$$
Let $\pi\colon \R^{\varrho+1}\rightarrow H$ be the projection $\pi(\xg)=\xg-\tfrac1d\sum_{v\mid\infty} d_vx_v\cdot\1$ with $\1=(1,\ldots,1)$. Thus there exists a unit $\eta$ in the subgroup generated by $u_1,\ldots,u_\varrho$ such that $(\pi\circ\L)(\eta^{-1}\alpha)\in P$. Note that for $\theta\in k^*$ we have 
$$
\L(\theta)=\tfrac1d\log\vert N^k_\Q(\theta)\vert\cdot\1+(\pi\circ\L)(\theta).
$$
Note also that $\log\vert N^k_\Q(\eta^{-1}\alpha)\vert=\log\vert N^k_\Q(\alpha)\vert\geq 0$. By the above, 
\begin{align*}
h(\etag^{-1}\alphag)\leq\Vert\L(\eta^{-1}\alpha)\Vert_\infty
&\leq\tfrac1d\vert\log\big\vert N^k_\Q(\eta^{-1}\alpha)\vert\big\vert+\Vert (\pi\circ\L)(\eta^{-1}\alpha)\Vert_\infty\\
&\leq\tfrac1d\log\vert N^k_\Q(\alpha)\vert+h(\ug_1)+\cdots+h(\ug_\varrho).
\end{align*} 
\end{proof}

%\subsection{On the height of the conjugates.} 
Let $\alpha\in\C$ be an algebraic number of degree $d\geq2$ with set of conjugates $\Omega=\{\alpha_1,\ldots,\alpha_d\}$. In some context, it could be useful to compare the height of the projective point $\alphag=(\alpha_1,\ldots,\alpha_d)$ with $h(A)$, the height of a subset $A\subseteq\Omega$ viewed as a subset of the projective space $\P_{\vert A\vert-1}(\Qb)$. We remark that $h(A)$ does not depend on the ordering. For a singleton $A$ we adopt the usual convention $h(A)=0$. 

Of course $h(A)\leq h(\alphag)$. We look for inequalities in the opposite direction, which cannot hold unconditionally. Let $G$ be the Galois group of $\Q(\alpha_1,\ldots,\alpha_d)/\Q$ which we identify to a transitive group of permutations on~$\Omega$. 

We recall (see~\cite[\S 1.5]{Di-Mo}) that a non-empty subset $A\subseteq\Omega$ is a block for $G$ if for each $\sigma\in G$ either $\sigma(A)=A$ or $\sigma(A)\cap A=\emptyset$. Thus $\{\alpha\}$ is a block, and the whole set $\Omega$ is also a block; they are called trivial blocks.
\begin{lemma}
\label{lem:include}
Let $A_0$ be a subset of $\Omega$. Then there exists a block $A\supseteq A_0$ such that 
Then 
$$
h(A)\leq 2^{\vert A\vert-\vert A_0\vert}h(A_0).
$$
\end{lemma}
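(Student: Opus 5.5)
We argue by induction on $|A\setminus A_0|$, or more precisely we run an algorithm that enlarges $A_0$ step by step. The enlargement must keep the height under control, at the cost of at most doubling it each time one element is added. If $A_0$ is already a block we take $A=A_0$ and there is nothing to prove. Otherwise there is $\sigma\in G$ with $\sigma(A_0)\neq A_0$ and $\sigma(A_0)\cap A_0\neq\emptyset$.

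We then pick $\beta\in\sigma(A_0)\setminus A_0$ and set $A_1=A_0\cup\{\beta\}$. The key claim is
$$
h(A_1)\leq 2h(A_0).
$$
To see this, choose $\gamma\in\sigma(A_0)\cap A_0$. The projective point $A_1$ has coordinates $(A_0,\beta)$. The point $\sigma(A_0)$, viewed projectively, is a Galois conjugate of the point $A_0$ (up to reordering of coordinates, which does not matter since $h(A)$ is independent of ordering). Galois invariance of the Weil height therefore gives $h(\sigma(A_0))=h(A_0)$.

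Next we normalise both points by their common coordinate $\gamma$. With this normalisation, every coordinate of $A_1$ divided by $\gamma$ is either a coordinate of $A_0/\gamma$ or a coordinate of $\sigma(A_0)/\gamma$. At each place $v$ we then have
$$
\max_{a\in A_1}|a/\gamma|_v\leq \max_{a\in A_0}|a/\gamma|_v\cdot\max_{b\in\sigma(A_0)}|b/\gamma|_v,
$$
because both factors are $\geq1$, each containing the coordinate $\gamma/\gamma=1$. Summing over places yields $h(A_1)\leq h(A_0)+h(\sigma(A_0))=2h(A_0)$.

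Iterating the construction, we produce $A_0\subsetneq A_1\subsetneq\cdots$, adding one element each time. The process must stop, since $\Omega$ is finite and $\Omega$ itself is a block. It stops at a block $A$ after exactly $|A|-|A_0|$ steps, and the bound $h(A)\leq 2^{|A|-|A_0|}h(A_0)$ follows.

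The only point needing care is the singleton convention. If $A_0$ is a singleton then $h(A_0)=0$, and the inequality still holds: in that case $\sigma(A_0)\cap A_0\ne\emptyset$ forces $\sigma(A_0)=A_0$, so a singleton is automatically a block and we stop at once. The main (mild) obstacle is justifying $h(\sigma(A_0))=h(A_0)$ through Galois invariance of the projective height. I expect no real difficulty there.
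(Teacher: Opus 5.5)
Your proof is correct and is essentially the paper's argument. The paper phrases it as taking a maximal $A\supseteq A_0$ satisfying the bound and deriving a contradiction by adjoining $\sigma(\beta)$ with $\sigma(\gamma)\in A$, via $h(A')\leq h(A)+h(\sigma(\beta)/\sigma(\gamma))=h(A)+h(\beta/\gamma)\leq 2h(A)$. This is the same one-element enlargement through a common element, using Galois invariance, as in your explicit iteration; your bound via $h(A_0)+h(\sigma(A_0))$ is an equivalent variant of that step.
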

\begin{proof}
Let 
$$
\Lambda=\{A\;\vert\; A_0\subseteq A\subseteq\Omega \hbox{ and } h(A)\leq 2^{\vert A\vert-\vert A_0\vert}h(A_0)\}.
$$
Then $A_0\in\Lambda\neq\emptyset$. Let $A$ be a maximal element of $\Lambda$ with respect to the inclusion. We assume by contradiction that $A$ is not a block. Thus there exists $\sigma\in G$ such that $\sigma(A)\neq A$ and $\sigma(A)\cap A\neq\emptyset$. Hence we can 
find $\beta,\gamma\in A$ such that $\sigma(\beta)\not\in A$ and $\sigma(\gamma)\in A$. Let $A'=A\cup\{\sigma(\beta)\}\supsetneq A$. 

At this stage we remark that
\begin{multline*}
h(\theta_1,\ldots,\theta_{d-1},\theta_d)=h(\theta_1/\theta_{d-1},\ldots,\theta_{d-2}/\theta_{d-1},1,\theta_d/\theta_{d-1})\\
\leq h(\theta_1/\theta_{d-1},\ldots,\theta_{d-2}/\theta_{d-1},1)+h(\theta_d/\theta_{d-1},1)
=h(\theta_1,\ldots,\theta_{d-1})+h(\theta_d/\theta_{d-1})
\end{multline*}
for any algebraic numbers $\theta_1,\ldots,\theta_d$.
Thus
\begin{multline*}
h({A'})\leq h({A})+h(\sigma(\beta)/\sigma(\gamma))=h(A)+h(\beta/\gamma)\\
\leq 2h(A)\leq 2^{1+\vert A\vert-\vert A_0\vert}h({A_0})=2^{\vert A'\vert-\vert A_0\vert}h({A_0}),
\end{multline*}
which contradicts the maximality of $A$.
\end{proof}
We say that $G$ is primitive if $G$ has no nontrivial blocks. We know (\cite[Corollary 1.5A]{Di-Mo}) that $G$ is primitive if and only if the stabiliser of $\{\alpha\}$ is a maximal subgroup of $G$. In turn, this last claim is equivalent by Galois Theory to the fact that $\Q(\alpha)/\Q$ is primitive, \ie that it does not contain non-trivial subextensions. Thus Lemma~\ref{lem:include} implies:
\begin{corollary}
\label{cor:height}
Let $\alpha$ be an algebraic number of degree $d\geq2$. Let us assume $\Q(\alpha)/\Q$ is primitive. Then for any conjugate $\beta$ of $\alpha$ with $\beta\neq\alpha$, we have $h(\alphag)\leq 2^{d-2}h(\beta/\alpha)$.
\end{corollary}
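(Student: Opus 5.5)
We apply Lemma~\ref{lem:include} with $A_0=\{\alpha,\beta\}$, which has $\vert A_0\vert=2$. The lemma produces a block $A\supseteq A_0$ with
$$
h(A)\leq 2^{\vert A\vert-2}h(A_0).
$$
Since $\beta\neq\alpha$, this block contains at least two elements and so is not the trivial singleton block $\{\alpha\}$. By the characterisation recalled just before the statement (\cite[Corollary 1.5A]{Di-Mo} combined with Galois theory), primitivity of $\Q(\alpha)/\Q$ means that $G$ has no nontrivial blocks. Hence $A=\Omega$, so $\vert A\vert=d$ and $h(A)=h(\alphag)$; the ordering of the coordinates is irrelevant, as already remarked.

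It remains to identify $h(A_0)$. By definition $A_0$ is the projective point $(\alpha,\beta)\in\P_1(\Qb)$. Since $\alpha\neq0$ (it has degree $d\geq2$), we have
$$
h(\alpha,\beta)=h(1,\beta/\alpha)=h(\beta/\alpha)
$$
by the convention $h(\gamma)=h(1,\gamma)$. Putting these together gives
$$
h(\alphag)\leq 2^{d-2}h(\beta/\alpha).
$$

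There is no real obstacle here. The only point needing care is the passage from ``$\Q(\alpha)/\Q$ primitive'' to ``$G$ acting on $\Omega$ has no nontrivial blocks,'' and this is exactly the quoted equivalence: the stabiliser of $\alpha$ corresponds to $\Q(\alpha)$, and it is maximal precisely when there are no intermediate fields.
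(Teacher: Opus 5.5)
Your proposal is correct and is exactly the paper's argument: apply Lemma~\ref{lem:include} with $A_0=\{\alpha,\beta\}$, use primitivity to force the resulting block to be $\Omega$, and identify $h(A_0)=h(\alpha,\beta)=h(\beta/\alpha)$. You have simply written out the details that the paper's one-line proof leaves implicit.
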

\begin{proof}
Apply Lemma~\ref{lem:include} with $A_0=\{\alpha,\beta\}$.
\end{proof}

\section{Proof of Theorem~\ref{thm:main-cubic}} 
\label{sec:main-cubic}
Put for short $\gammag=(\gamma,\gamma',\gamma'')$ with the other two embeddings, and write $\alpha=x-\xi_ty$. By Lemma~\ref{lem:box} we can find an integer $n$ such that $\beta=\gamma_t^{-n}\alpha$ satisfies $h(\beta,\beta',\beta'')\leq \tfrac13\log |q| +h(\gammag_t)$. Replacing $\gamma$ by $\gamma^{-1}$ 
and $h(\gammag_t)$ by $h(\gammag_t^{-1})\leq 2h(\gammag_t)$ if needed, we can assume $n\geq 0$ and 
\begin{equation}
\label{eq:beta}
h(\beta,\beta',\beta'')\leq \frac13\log |q| +2h(\gammag_t).
\end{equation}
By standard estimates (assuming $t \geq 2$)
\begin{equation}
\label{eq:xi}
\vert{\rm Im}(\xi'_t)\vert \geq t^{-c},\qquad h(\xi_t)\leq c\log t
\end{equation}
and
\begin{equation}
\label{eq:gamma}
c^{-1}\log t\leq h(\gammag_t)\leq c\log t.
\end{equation}
for a positive constant $c>1$ (which does not depend on~$t$). 

Since $\beta\gamma_t^n=\alpha=x-\xi_ty\in\Q+\Q\xi_t$, the point $(\beta \gamma_t^n,\beta' {\gamma'}_t^n,\beta''\,{\xi''_t}^n)$ belongs to the line in $\P_2$ through the points $(1,1,1)$ and $(\xi_t,\xi_t',\xi''_t)$. This line has equation
$$
\left\vert\begin{matrix}
x_1&x_2&x_3\\
1&1&1\\
\xi_t&\xi_t'&\xi''_t
\end{matrix}\right\vert=(\xi''_t-\xi_t')x_1+(\xi_t-\xi''_t)x_2+(\xi_t'-\xi_t)x_3=0.
$$
Thus
\begin{equation}
\label{eq:zerosum}
(\xi''_t-\xi_t')\beta \gamma_t^n+(\xi_t-\xi''_t)\beta' {\gamma'}_t^n+(\xi_t'-\xi_t)\beta''\,{\gamma''_t}^n=0.
\end{equation}
It is of course the usual Siegel Identity. Clearly we do not have proper vanishing subsums in this equation. We want to apply~\cite[Theorem 4.1]{Am-Ma-Za}. To do that, we still need $n\geq K$ with $K$ larger than a fixed constant.

Recall that $n\geq0$. By elementary height estimates and by~\eqref{eq:beta}
\begin{multline*}
h(\alpha,\alpha',\alpha'')
=h(\beta \gamma_t^n,\beta' {\gamma'}_t^n,\beta''\,{\gamma''_t}^n)\\
\leq nh(\gammag_t)+h(\beta,\beta',\beta'')
\leq (n+2)h(\gammag_t)+\frac13\log \vert q\vert.
\end{multline*}
By~\eqref{eq:xi} we have $\vert\alpha'\vert\geq \vert {\rm Im}(\xi'_t)\vert\cdot \vert y\vert\geq t^{-c}\vert y\vert$. We assume 
$$
\vert y\vert\geq t^{c(K+3)}\vert q\vert^{1+\e}.
$$
Then, by the upper bound for $h(\gammag_t)$ in~\eqref{eq:gamma},
\begin{align*}
h(\alpha,\alpha',\alpha'')\geq\log\vert\alpha'\vert
&\geq c(K+2)\log t+(1+\e)\log \vert q\vert\\
&\geq(K+2)h(\gammag_t)+(1+\e)\log \vert q\vert.
\end{align*}
Comparing the upper bound and the lower bound for $h(\alpha,\alpha',\alpha'')$ we get
$$
(K+2)h(\gammag_t)+(\tfrac23+\e)\log\vert q\vert\leq(n+2)h(\gammag_t).
$$
Thus
\begin{equation}
\label{eq:nn}
n\geq(\tfrac23+\e)h(\gammag_t)^{-1}\log\vert q\vert+K.
\end{equation}

In particular $n\geq K$ and, if $K$ is sufficiently large with respect to $\xi$ and $\gamma$, we can apply~\cite[Theorem 4.1]{Am-Ma-Za} with $r=3$ and with $(f_1,f_2,f_3)=(\gamma,\gamma',\gamma'')$ to the equation~\eqref{eq:zerosum}. By that theorem, and by taking into account \cite[Remark 4.2~i) and ii)]{Am-Ma-Za}, 
\begin{multline*}
\frac{1}{2}h(\gammag_t)
\leq\frac{1}{n}h(\xi''_t-\xi_t')\beta,(\xi_t-\xi''_t)\beta',(\xi_t'-\xi_t)\beta'')\\
+O(\tfrac{1}{K}h(\gammag_t)+h(\gammag_t)^{1/2}+K),
\end{multline*}
where the implicit constant in the big-O depends only on $P$, as also below. By~\eqref{eq:beta},~\eqref{eq:xi} and~\eqref{eq:gamma} we have
\begin{align*}
h(\xi''_t-\xi_t')\beta,(\xi_t-\xi''_t)\beta',&(\xi_t'-\xi_t)\beta'')\\
&\leq h(\xi''_t-\xi_t',\xi_t-\xi''_t,\xi_t'-\xi_t)+h(\beta,\beta',\beta'')\\
&\leq  \tfrac13\log\vert q\vert+O(h(\gammag_t)+\log t)\\
&\leq\tfrac13\log\vert q\vert+O(h(\gammag_t)).
\end{align*}
Thus
$$
h(\gammag_t)\leq\tfrac1n\cdot\tfrac23\log\vert q\vert+ O(\tfrac{1}{K}h(\gammag_t)+\tfrac{1}{n}h(\gammag_t)+h(\gammag_t)^{1/2}+K).
$$
By~\eqref{eq:nn} again
$$
\tfrac1n\cdot\tfrac23\log\vert q\vert\leq(1+\tfrac32\e)^{-1}h(\gammag_t).
$$
Hence if $K>K(\e)$ we have $h(\gammag_t) \leq c_0(\e)$ and thus $t\leq t_0(\e)$ by~\eqref{eq:gamma}. This gives the desired upper bound for $|y|$; and the same for $|x|$ follows easily from $P(t,x/y)=q/y^3$.

\section{Proof of Theorem~\ref{thm:AMZbis}} 
\label{sec:Am-Ma-Za-bis}

Theorem~\ref{thm:AMZbis} is a variant of~\cite[Proposition 6.1]{Am-Ma-Za} where we make the result explicit with respect to the the height of $\alpha_1,\ldots,\alpha_r$. The quoted proposition was in turn a consequence of \cite[Theorem 1.4]{Am-Ma-Za}, which we state below for the reader's convenience.
\begin{theorem}
\label{thm:AMZ-Thm1.4}
Let $r\geq 2$ and $f_1,\ldots,f_r\in\F$ be non-zero rational functions such that $f_i/f_j$ is non-constant for some $i$ and $j$. Then there exists a positive real number $C$ depending only on $f_1,\ldots,f_r$, having the following properties. Let $\alphag=(\alpha_1:\cdots:\alpha_r)\in\P^{r-1}(\Qb)$.  
Consider, for a natural number $n$, a solution $P\in \Cu(\Qb)$ of the equation
$$
\alpha_1 f_1(\Pt)^n+\cdots + \alpha_r f_r(\Pt)^n =0.
$$
Then, if $n\geq C$ and if there are no proper vanishing subsums, we have 
$$
h(\Pt)\leq \frac{r h(\alphag)}{n}+C.
$$
\end{theorem}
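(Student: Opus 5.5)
Since this statement is \cite[Theorem 1.4]{Am-Ma-Za}, the plan below is only how one would reprove it. The idea is to compare two ways of computing heights of the point $\Pt$: globally through the functions $f_i/f_j$, and place by place through the vanishing sum. Let $K$ be a number field containing $\alpha_1,\ldots,\alpha_r$, the coordinates of $\Pt$ and the coefficients of the $f_i$. Write $\Phi=(f_1:\cdots:f_r)\colon\Cu\to\P_{r-1}$. Some $f_i/f_j$ is non-constant, so $\Phi$ is a non-constant morphism. Standard height machinery on curves then gives
$$h(\Phi(\Pt))\geq \delta\, h(\Pt)-C_1,$$
with $\delta>0$ depending only on the degree of $\Phi$ and the chosen degree-one divisor. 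It therefore suffices to prove $h(\Phi(\Pt))\leq r h(\alphag)/n+C_2$, with $\delta$ absorbed into the constants.

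The local input comes from the absence of proper vanishing subsums. At each place $v$ of $K$, the relation $\sum_i\alpha_i f_i(\Pt)^n=0$ forces the largest term $|\alpha_i f_i(\Pt)^n|_v$ to be matched by a second term $|\alpha_j f_j(\Pt)^n|_v$. The matching is up to a factor $r$ at archimedean $v$ and exact at non-archimedean $v$. Taking $n$-th roots,
$$\log\max_k|f_k(\Pt)|_v-\log\mathrm{second\,max}_k|f_k(\Pt)|_v\leq \tfrac1n\bigl(\log\max_k|\alpha_k|_v-\log\min_k|\alpha_k|_v+\log^+|r|_v\bigr).$$
Summing over $v$ with the product formula, the right-hand side totals at most $\tfrac{r}{n}h(\alphag)+O(1/n)$, since $\log\max-\log\min$ is controlled by the heights of the ratios $\alpha_i/\alpha_j$. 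So the sum over $v$ of the ``gap'' between the largest and second-largest of the $|f_k(\Pt)|_v$ is at most $r h(\alphag)/n+O(1)$.

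The main obstacle, as I see it, is the reverse inequality: the gap sum must be at least $c\,h(\Pt)-C$. This is false pointwise for general $r$, since several $|f_k(\Pt)|_v$ may coincide. It should instead follow from a geometric decomposition of $\Cu$. Near each zero or pole of a non-constant ratio $f_i/f_j$, the functions $f_1,\ldots,f_r$ are ordered by their orders of vanishing. The top two, after grouping those with equal order, have a strictly non-constant ratio, and there the gap behaves like a local Weil function for that zero or pole. Away from these points all ratios are bounded above and below, so the contribution is $O(1)$ once summed.

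I would first try to make this precise by an induction on $r$, grouping indices by order at each point. The difficulty is at clusters where several $f_k$ share the top order. There I would pass to the ratios within the cluster and use that, by hypothesis, not all of them are constant. If needed I would allow a subset $I$ of indices and the subsum over $I$, whose total need not vanish, to absorb the cluster into a modified coefficient. Its height costs only $O(h(\alphag)/n)$ after taking $n$-th roots, but only if $n\geq C$ is large enough to separate the orders, since otherwise the $\alpha$'s could swamp the order differences. Summing the local Weil functions over all $v$ reconstructs $\sum\deg\cdot h(\Pt)$ up to $O(1)$. Combining with the upper bound from the previous paragraph yields $h(\Pt)\leq r h(\alphag)/n+C$.
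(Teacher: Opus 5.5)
Your place-by-place upper bound is correct, and it is where the factor $r$ comes from. Summing $\log\max_k|\alpha_k|_v-\log\min_k|\alpha_k|_v$ over $v$ gives $h(\alphag)+h(\alpha_1^{-1}:\cdots:\alpha_r^{-1})\le r\,h(\alphag)$. The genuine gap is the reverse inequality, which is not merely hard but false. Take $\Cu=\P_1$, $f_1=1$, $f_2=T$, $f_3=1+T$; every ratio is non-constant, so there is nothing to group. Since $1+t-(1+t)=0$, at each finite place the largest of $1,|t|_v,|1+t|_v$ is attained twice. At each archimedean place it is at most twice the second largest. So your gap sum is at most $\log 2$ for every $t\in\Qb$, while $h(t)$ is unbounded. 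In general the gap sum is, up to a bounded error, the height attached to the effective divisor $\sum_Q e_Q Q$, where $e_Q$ is the difference between the two smallest values of $\ord_Q f_k$. This divisor is non-zero only if $\Phi(\Cu)$ passes through a coordinate vertex of $\P_{r-1}$. In that case your argument is a valid Liouville-type proof, up to an $O(h(\Pt)^{1/2})$ error; otherwise it gives nothing.

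Your cluster repair does not get around this; it is circular. In the example, near $T=0$ the top cluster is $\{1,3\}$, and the ``modified coefficient'' is $\alpha_1+\alpha_3(1+t)^n$. This is not a fixed algebraic number but a value depending on $\Pt$. By the equation it equals $-\alpha_2t^n$, so it is exactly as small as the equation forces. Any lower bound for it must involve its height, which is about $n\,h(t)$, not $O(h(\alphag))$. After taking $n$-th roots you recover $h(t)$, the very quantity you are trying to bound. Bounding $h(t)$ from $\alpha_1+\alpha_2t^n+\alpha_3(1+t)^n=0$ is a genuine Thue-type problem: one must show that $1,t^n,(1+t)^n$ admit no linear relation with coefficients of unusually small height. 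No combination of local size comparisons with the product formula can settle this. What is missing is a diophantine-approximation ingredient that beats Liouville. That is what the proof in \cite{Am-Ma-Za} supplies; the present paper only quotes the theorem. As its abstract and introduction indicate, that work is based on diophantine approximation techniques and grew out of \cite[Lemma 2.3]{Be-Sc}.
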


We deduce Theorem~\ref{thm:AMZbis} from this last theorem as we did in~\cite{Am-Ma-Za} to prove Proposition 6.1. We first recall a version of the classical functorial bound for the height.\\

Let $\Cu$ be a projective smooth curve defined over $\Qb$, with function field denoted $\F:=\Qb(\Cu)$. Given $f_1,\ldots,f_r\in\F$ not all zero, we put
$$
\div(f_1,\ldots,f_r):=\sum_P \min_j\ord_P(f_j) P.
$$ 
%As a special case, let $f$ be a non-zero rational function on $\Cu$. We define as usual  its degree $d(f)$ as the the degree of the polar divisor $\div(f)_{\infty}=-\div(1,f)$. This is the geometric height of $(1:f)\in\P^1(\F)$. 

%We define an arithmetic height $h(\cdot)$ of a rational function $f$ on $\Cu$ as follows. We choose once and for all a non-constant $t\in\F\backslash\Qb$. Let $F(X,Y)\in\Qb[X,Y]$ be the irreducible polynomial such that $F(t,f)=0$ (note that $F$ has degree at most $d(f)$ in $X$ and at most $d(t)$ in $Y$). 
%\begin{definition} 
%\label{def.height}  
%For a function $f\in\F$, we define the height $h(f)$ as the projective Weil height of the vector of the coefficients of $F$.
%\end{definition}
%Clearly $h(1/f)=h(f)$. Also, this coincides with the affine height on $\Qb[t]$ (if $\Cu$ is the affine line and $f=P(t)$ is a polynomial then $F(X,Y)=P(X)-Y$.)\\
%

\begin{lemma}[\cite{Am-Ma-Za}, Lemma 3.3]
\label{lem:AMZ-Lem3.3}
For $r \geq 2$ let $f_1,\ldots,f_r\in\F$ and $\Pt\in\Cu(\Qb)$, not a pole or a common zero of $f_1,\ldots,f_r$. Put $d:=-\deg\div(f_1,\ldots,f_r)$. Then
$$
h(f_1(\Pt),\ldots,f_r(\Pt)) = d h(\Pt) + O(1+h(\Pt)^{1/2})
$$
where the implicit constant in the big-$O$ may depend on $f_1,\ldots,f_r$ but not on $\Pt$.
\end{lemma}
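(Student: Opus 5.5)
The plan is to run the standard Weil height machine on the curve $\Cu$ in two stages: first identify $h(f_1(\Pt),\ldots,f_r(\Pt))$ with a height attached to a divisor of degree $d$, and then compare that height with $d\,h(\Pt)$.

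\emph{Stage 1.} Put $D:=-\div(f_1,\ldots,f_r)$, so that $\deg D=d$ and $\div(f_j)+D\geq 0$ for every $j$. Moreover at each point some $f_j$ attains equality. Hence the functions $f_1,\ldots,f_r$ are sections of $\O(D)$ with no common zero as sections. Since $\Cu$ is a smooth curve, the rational map $\phi=(f_1:\cdots:f_r)\colon\Cu\to\P_{r-1}$ extends to a morphism with $\phi^*\O(1)\cong\O(D)$. For $\Pt$ not a pole or a common zero of the $f_j$, the value $\phi(\Pt)$ is literally $(f_1(\Pt):\cdots:f_r(\Pt))$. Functoriality of Weil heights, made concrete by writing local Weil functions $\lambda_{D,v}(\Pt)=-\log\max_j|f_j(\Pt)|_v$ up to bounded $M_K$-constants, then gives
$$
h(f_1(\Pt),\ldots,f_r(\Pt))=h_D(\Pt)+O(1).
$$

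\emph{Stage 2.} Let $D_1$ be the degree-one divisor used to define $h$ on $\Cu(\Qb)$. Then $E:=D-dD_1$ has degree $0$, and by additivity of the height machine
$$
h_D=d\,h_{D_1}+h_E+O(1).
$$
It remains to show $h_E(\Pt)=O(1+h(\Pt)^{1/2})$.

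For this I would embed $\Cu$ into its Jacobian $J$ via a base point, $j\colon\Cu\to J$. A degree-zero divisor class on $\Cu$ is the pullback under $j$ of an algebraically trivial (translation-invariant, antisymmetric) class on $J$, up to linear equivalence. By Néron--Tate theory, the height $h_E$ then agrees up to $O(1)$ with a linear function $\Pt\mapsto\langle j(\Pt),e\rangle$, where $e\in J(\Qb)$ corresponds to $E$ and $\langle\cdot,\cdot\rangle$ is the canonical pairing.

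Cauchy--Schwarz for the positive semidefinite Néron--Tate form then gives
$$
|h_E(\Pt)|\leq \hat h(e)^{1/2}\,\hat h(j(\Pt))^{1/2}+O(1).
$$
Finally, $\hat h(j(\Pt))\ll 1+h(\Pt)$, because the $\Theta$-height pulled back by $j$ is a height for a divisor of degree $g$ on $\Cu$, hence $\ll 1+h_{D_1}$. The case $g=0$ is trivial, since then $E$ is principal and $h_E=O(1)$.

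Combining the two stages yields $h(f_1(\Pt),\ldots,f_r(\Pt))=d\,h(\Pt)+O(1+h(\Pt)^{1/2})$, with constants depending only on the $f_j$ (and the fixed choices $D_1$ and Weil functions).

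The main obstacle is Stage 2, the square-root error term for degree-zero divisors. This requires the quadraticity of the Néron--Tate height and control of the $O(1)$ terms relating Weil heights to canonical heights on $J$, and keeping those constants effective takes some care. Stage 1 is routine once one observes that removing the common zeros makes $\phi$ base-point free.
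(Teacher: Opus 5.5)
Your argument is correct. It is the standard route behind \cite[Lemma 3.3]{Am-Ma-Za}, which the present paper only quotes: functoriality for the base-point-free morphism $\phi=(f_1:\cdots:f_r)$ with $\phi^*\O(1)\cong\O(D)$, followed by the classical bound $h_E=O(1+h^{1/2})$ for degree-zero divisors, obtained via N\'eron--Tate heights on the Jacobian and Cauchy--Schwarz.

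There is one sign slip. The local Weil function for $D=-\div(f_1,\ldots,f_r)$ is $\log\max_j|f_j(\Pt)|_v$, not $-\log\max_j|f_j(\Pt)|_v$. The latter is a Weil function for $-D$ and would give $h_D(\Pt)=-h(f_1(\Pt),\ldots,f_r(\Pt))$. Your displayed identity $h(f_1(\Pt),\ldots,f_r(\Pt))=h_D(\Pt)+O(1)$ is nevertheless correct.
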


\begin{proof}[Proof of Theorem~\ref{thm:AMZbis}] We closely follow the proof of Proposition 6.1 in~\cite{Am-Ma-Za}, pp. 2639-40. For the convenience of the reader we repeat all the relevant steps. Dividing~\eqref{eq:hyperplane} by $\gamma_1(\Pt)$ and replacing $\Gamma$ by 
$$
\{(1,f_2/f_1,\ldots,f_r/f_1)\;\vert\; \fb\in\Gamma\}, 
$$
we may assume $\gamma_1=1$ and $\Gamma\subseteq \{x_1=1\}$. We decompose $\Gamma$ as $\Gamma_{\rm tors}\oplus\Gamma'$, where $\Gamma'$ is freely generated. We fix free generators $\gb_1,\ldots,\gb_\kappa$ of $\Gamma'$ and we write
$$
\gammag=\omegag\gb_1^{\lambda_1}\cdots\gb_\kappa^{\lambda_\kappa}.
$$
with $\omegag\in\Gamma_{\rm tors}$ and $\lambda_1,\ldots,\lambda_\kappa\in\Z$. Let $Q>1$ be an integer which will be fixed later in~\eqref{eq:Q}. Let $A=\tfrac12Q\max\vert\lambda_j\vert$. By Dirichlet's Theorem on simultaneous approximation, there exists a positive integer $q< Q^{\kappa}$ and integers $p_j$ such that 
$$
\left\vert q\frac{\lambda_j}{A}-p_j\right\vert\leq \frac{1}{Q}~~~(j=1,\ldots,k).
$$
Let $n=\lceil A/q\rceil$. We write $\lambda_j=np_j+r_j$ and we set, for $i=1,\ldots,r$, 
\begin{equation}
\label{eq:reparametrize}
\rhog= \prod_{j=1}^\kappa \gb_j^{r_j}\in\Gamma,
\qquad \fb= \prod_{j=1}^\kappa \gb_j^{p_j}\in\Gamma,
\qquad \alpha'_i=\alpha_i\omega_i\theta_i(P)\rho_i(P).
\end{equation}
Since $\gamma_i=\omega_i\rho_if_i^n$, equation~\eqref{eq:hyperplane} can be rewritten as 
\begin{equation}
\label{eq:AMZ}
\alpha'_1f_1(\Pt)^n+\cdots+\alpha'_rf_r(\Pt)^n=0.
\end{equation}
By assumption we do not have proper vanishing subsums. We have
$$
\vert p_j\vert\leq \left\vert q\frac{\lambda_j}{A}\right\vert+ \frac{1}{Q}\leq (2q+1)Q^{-1}\leq 2Q^{\kappa-1}.
$$
Thus, $\fb$ belongs to a finite set, depending only on $\Gamma$ and on $Q$, as the exponents $\lambda_j$ vary. Moreover
$$
\vert r_j\vert=\left\vert\frac{A}{q}\left(q\frac{\lambda_j}{A}-p_j\right)-\left(n-\frac{A}{q}\right)p_j\right\vert
\leq nQ^{-1}+2Q^{\kappa-1}.
$$
Hence $-\deg\div(1,\theta_i\rho_i)= O(n/Q+Q^{\kappa-1})$, where the implicit constant in the big-$O$ depends only on $\theta_1,\ldots,\theta_r$ and on $\Gamma$. Using (only the upper bound in) Lemma~\ref{lem:AMZ-Lem3.3} for the two functions $1$ and $\theta_i\rho_i$, we see that
$$
h(\theta_1(P)\rho_1(P),\ldots,\theta_r(P)\rho_r(P))\leq\sum_ih(\theta_i(P)\rho_i(P))\leq c_2(n/Q+Q^{\kappa-1})h(\Pt),
$$
where the constant $c_2$ depends only on $\theta_1,\ldots,\theta_r$ and on $\Gamma$. 
Hence, by definition~\eqref{eq:reparametrize} of $\alphag'$,
\begin{equation}
\label{eq:alpha'}
h(\alphag')\leq c_2(n/Q+Q^{\kappa-1})h(\Pt)+h(\alphag).
\end{equation}
We want to apply Theorem~\ref{thm:AMZ-Thm1.4} to equation~\eqref{eq:AMZ}. Since $\fb$ belongs to a finite set depending only on $\Gamma$ and on $Q$, 
the constant $C=C_Q>0$ appearing in this theorem depends not only on $\theta_1,\ldots,\theta_r,\Gamma$ but also on $Q$. 

There is a $j_0$ such that $\lambda_{j_0}=\pm 2AQ^{-1}$ and thus $p_{j_0}\neq0$, otherwise
$$
\frac{2}{Q}\leq\frac{2q}{Q}=\left\vert q\frac{\lambda_{j_0}}{A}\right\vert=\left\vert q\frac{\lambda_{j_0}}{A}-p_{j_0}\right\vert< \frac{1}{Q},
$$
a contradiction. Since $\gb_1,\ldots,\gb_\kappa$ is a basis of $\Gamma'$ we thus have $\fb\not\in\Gamma_{\rm tors}$. Since $\Gamma\subseteq \{x_1=1\}$, we have $f_1=1$. Thus, by Assumption~\eqref{ass:H2}, $f_i=f_i/f_1$ is non-constant for some $i>1$, which is one of the assumptions of~Theorem~\ref{thm:AMZ-Thm1.4}.

We now need to be more precise with respect to op.cit., where the unboundedness of $A$ was enough to conclude. Again by Lemma~\ref{lem:AMZ-Lem3.3} 
 (with $\fb$ replaced by $\gb_j$)
\begin{equation}
\label{eq:A}
h(\gammag_\Pt)\leq\sum_{j=1}^\kappa\vert\lambda_j\vert h(\gb_j(\Pt))\leq c_3AQ^{-1} (1+h(\Pt)),
\end{equation}
where the constant $c_3$ depends only on $\Gamma$. Recall that $A/q\leq n$ for some integer $q$ with $1\leq q\leq Q^\kappa$. Thus, by~\eqref{eq:A}, 
\begin{equation}
\label{eq:n}
n\geq \frac{A}{q}\geq \frac{A}{Q^\kappa}
\geq\frac{h(\gammag_\Pt)}{c_3Q^{\kappa-1}(1+h(\Pt))}.
\end{equation}
By assumption~\eqref{eq:hgamma_t} we have $h(\gammag_\Pt)\geq C'(1+h(\Pt))$. Hence
\begin{equation}
\label{eq:n2}
n\geq C'c_3^{-1}Q^{-(\kappa-1)}. 
\end{equation}
We assume
\begin{equation}
\label{eq:C'-1}
C'\geq c_3C_QQ^{\kappa-1}.
\end{equation}
Then $n\geq C_Q$ and we can apply Theorem~\ref{thm:AMZ-Thm1.4} to~\eqref{eq:AMZ}. By this theorem, taking into account the bound~\eqref{eq:alpha'} for the height of $\alphag'$, we get 
\begin{equation}
\label{eq:hP}
h(\Pt)\leq \frac{r h(\alphag')}{n}+C_Q\leq rc_2\left(\frac1{Q}+\frac{Q^{\kappa-1}}{n}\right)h(\Pt)+\frac{rh(\alphag)}{n}+C_Q.
\end{equation}
We also assume
\begin{equation}
\label{eq:C'-2}
C'\geq c_3Q^{2\kappa-1}
\end{equation}
so that $n\geq Q^{\kappa}$ by~\eqref{eq:n2}. Choosing
\begin{equation}
\label{eq:Q}
Q=\lceil 8rc_2\rceil
\end{equation}
we get 
\begin{equation}
\label{eq:1}
rc_2\left(\frac1{Q}+\frac{Q^{\kappa-1}}{n}\right)\leq2rc_2Q^{-1}\leq \frac14.
\end{equation}
By assumption~\eqref{eq:hgamma_t} we also have $h(\gammag_\Pt)\geq c'\, h(\alphag)$. We choose
\begin{equation}
\label{eq:c'}
c'=4rc_3Q^{\kappa-1}.
\end{equation}
Then, by~\eqref{eq:n},
\begin{equation}
\label{eq:2}
\frac{rh(\alphag)}{n}\leq rc'^{-1}c_3Q^{\kappa-1}(1+h(\Pt))=\frac14(1+h(\Pt)).
\end{equation}
Replacing~\eqref{eq:1} and~\eqref{eq:2} into~\eqref{eq:hP} we get $h(\Pt)\leq \tfrac12h(\Pt)+\tfrac14+C_Q$ and 
$$
h(\Pt)\leq \frac12+2C_Q.
$$
By~\eqref{eq:Q},~\eqref{eq:C'-1},~\eqref{eq:C'-2} and~\eqref{eq:c'}, the theorem is proved with
$$
Q=\lceil 8rc_2\rceil,\quad C'=c_3Q^{\kappa-1}\max\{C_Q,c_3Q^{\kappa}\},\quad c'=4rc_3Q^{\kappa-1},
$$
and $C''=1/2+2C_Q$.
\end{proof}

\section{Proof of Theorem~\ref{thm:main}}
\label{sec:Thomas-bis}

Let us recall from the introduction the relevant notations. Let $P\in \Z[T,X]$ be an irreducible polynomial, monic of degree $d$ in $X$, and let $\xi\in\overline{\Q(T)}$ be one of its roots. By~\cite[Lemma 3.1]{Am-Ma-Za2}, for $t\in\N$ large $[\Q(\xi_t):\Q]=d$ and moreover the rank of the units group of $\Q(\xi_t)$ is constant (see appendix). %For $t\in\N$ large, $[\Q(\xi_t):\Q]=d$ and the numbers $r_1$ and $2r_2$ of real and imaginary immersions of the number field $\Q(\xi_t)$ are both constants by~\cite[Lemma 3.1]{Am-Ma-Za2}. As usual, we let $r=r_1+r_2-1$.\\
We need a preliminary result from~\cite{Am-Ma-Za2}~(see appendix).

\begin{lemma}[part of \cite{Am-Ma-Za2}, Lemma 3.3]
\label{lem:AMZ2-Lem3.3}
Let us assume~\ref{ass:Thomas-H}. Then there exists $c_4\geq1$ effectively depending on $\xi$ such that for an integer $t\geq c_4$ the following holds:
\begin{enumerate}[(1)]
%\item $\rank\Z[\xi_t]^*=\rank (\Z[T,\xi]^*)_t=\rank \Z[T,\xi]^*=r$.
\item $\rank\Z[\xi_t]^*=\rank (\Z[T,\xi]^*)_t=\rank \Z[T,\xi]^*=\rank \O_{\Q(\xi_t)}^*$.
\item $\Q(\xi)_t=\Q(\xi_t)$ and $[\Q(\xi)_t:\Q]=d$. 
\end{enumerate}
\end{lemma}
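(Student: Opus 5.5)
The plan is to control everything through the asymptotic behaviour of the conjugates of functional units at $T=\infty$, using Puiseux series. For large positive $t$ the $d$ roots of $P(t,X)$ are given by $d$ convergent Puiseux expansions $\sigma_j(\xi)(t)$, grouped into $\varrho_1$ real ones and $\varrho_2$ complex-conjugate pairs, as recalled in the introduction. For a unit $u\in\Z[T,\xi]^*$ each conjugate $\sigma_j(u)$ is again a Puiseux series. Hence
$$
\log|\sigma_j(u)_t| = a_j(u)\log t + b_j(u) + o(1),
$$
with $a_j(u)\in\Q$, and with an effective rate in the $o(1)$. Conjugate series have the same $a_j$, so $u\mapsto (a_j(u))_j$ is a homomorphism $\mathcal A$ from $\Z[T,\xi]^*$ into the space $V$ of vectors constant on conjugate pairs. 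Since the norm of $u$ is $\pm1$, we get $\sum_j a_j(u)=0$. Thus $\mathcal A$ lands in a $\Q$-space of dimension $\varrho_1+\varrho_2-1=\varrho$.

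The first key step is to show that $\ker\mathcal A$ is torsion. Suppose $\mathcal A(u)=0$. Then every conjugate of $u$ is bounded at $T=\infty$. Since $u$ is integral over $\Z[T]$, it has no finite poles either, so $u$ is constant, i.e. algebraic. By Assumption~\ref{ass:Thomas-H}(2) applied with $v=u$, it is a root of unity. Combined with Assumption~\ref{ass:Thomas-H}(1), this gives $\rank\Z[T,\xi]^*=\varrho$. Moreover the vectors $\mathcal A(u_1),\ldots,\mathcal A(u_\varrho)$ span the whole hyperplane $\{\sum_j a_j=0\}$ inside $V$.

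Next I prove (2). Suppose $P(t,X)$ had a proper monic factor over $\Z$ for some large $t$. Its set $S$ of roots is stable under complex conjugation, so it is a union of real roots and conjugate pairs. For each $i$, $\prod_{j\in S}\sigma_j(u_i)_t$ is then an algebraic integer whose inverse is also integral, hence a rational unit, so it equals $\pm1$. Taking logarithms,
$$
\Big(\sum_{j\in S}a_j(u_i)\Big)\log t + O(1) = 0,
$$
so $\sum_{j\in S}a_j(u_i)=0$ once $t$ exceeds an effective bound. Because the $\mathcal A(u_i)$ span the full sum-zero hyperplane of $V$, the indicator vector of $S$ must be a multiple of $(1,\ldots,1)$. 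Hence $S$ is all the roots, a contradiction. So $P(t,X)$ is irreducible for $t\ge c_4$, which gives $[\Q(\xi_t):\Q]=d$ and $\Q(\xi)_t=\Q(\xi_t)$. The signature of $\Q(\xi_t)$ is then $(\varrho_1,\varrho_2)$, so $\rank\O^*_{\Q(\xi_t)}=\varrho$.

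Finally I prove (1) by the same asymptotics. Suppose $\prod_i u_{i,t}^{m_i}$ is a root of unity with $m\neq0$. Taking logarithms gives $\sum_i m_i\mathcal A(u_i)=0$, which contradicts independence. Here one may normalise $m$ to have coprime entries, and the error term is uniform because only the finitely many directions $\mathcal A(u_i)$ matter. So $\rank(\Z[T,\xi]^*)_t\ge\varrho$. The chain
$$
(\Z[T,\xi]^*)_t\subseteq\Z[\xi_t]^*\subseteq\O^*_{\Q(\xi_t)}
$$
then squeezes all the ranks to equal $\varrho$. The main obstacle I expect is making the $O(1)$ error terms, and hence $c_4$, effective and uniform. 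In particular, the independence of the specialized units must hold for all exponent vectors $m$ simultaneously. I would handle this by noting that the $u_{i,t}$ are independent as soon as their logarithmic vectors $(\log|\sigma_j(u_i)_t|)_j$ are linearly independent. That matrix equals $\log t$ times the rank-$\varrho$ matrix $(a_j(u_i))$ plus an effectively bounded perturbation, so a determinant (minor) argument gives independence for $t$ large.
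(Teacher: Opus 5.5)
Your argument is correct, but it takes a different route from the one the paper relies on. The paper gives no proof of its own and defers to \cite[Lemma 3.3]{Am-Ma-Za2} with the appendix's corrections. As the appendix indicates, that proof is a rank squeeze
$\varrho\le\rank(\Z[T,\xi]^*)_t\le\rank\Z[\xi_t]^*\le\rank\O^*_{\Q(\xi_t)}\le\varrho_1+\varrho_2-1=\varrho$.
There the first inequality, independence of the specialized functional units for large $t$, comes from the Bombieri--Masser--Zannier type specialization theorem \cite[Theorem 3.2]{Am-Ma-Za2}, which is also invoked in Section~\ref{sec:Thomas-bis}. Equality in the last step forces $[\Q(\xi_t):\Q]=d$, because a proper factor of $P(t,X)$ would carry fewer real roots or fewer conjugate pairs. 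So (1) and (2) come out together.

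You replace that bounded-height input by explicit Puiseux asymptotics at $T=\infty$. The exponent map $\mathcal A$ is injective modulo torsion by Assumption~\ref{ass:Thomas-H}(2), which gives $\rank\Z[T,\xi]^*=\varrho$ and the spanning of the sum-zero hyperplane. Irreducibility then follows from your orthogonality argument on the indicator vector of $S$. Independence follows from a nonvanishing $\varrho\times\varrho$ minor of $(\log t)(a_j(u_i))+O(1)$. Your route is elementary, self-contained and visibly effective for this lemma. The cited route is more general, since it works at arbitrary algebraic points of large height and not only along $t\to+\infty$, and the same tool is reused later for the basis property.

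One caution: your sentence about normalising $m$ to coprime entries does not give uniformity, because the error term grows with $|m|$. It is the minor argument in your last paragraph that proves independence for all $m$ simultaneously. That argument needs all $d$ conjugates to be embeddings of $\Q(\xi_t)$, so proving (2) before (1), as you do, matters.
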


We also need a complement to \cite[Proposition 5.4]{Am-Ma-Za2}, the statement of which we first recall for the reader's convenience~(see appendix).

\begin{proposition}[\cite{Am-Ma-Za2}, Proposition 5.4]
\label{prop:AMZ2-Prop5.4}
Let us assume~\ref{ass:Thomas-H}(2). Then there exists $c_5\geq1$ effectively depending on $\xi$ such that for an integer $t\geq c_5$ and for $\mu\in\Z[T,\xi]^*$ we have $\Q(T,\mu)_t=\Q(\mu_t)$.
\end{proposition}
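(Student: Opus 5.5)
The inclusion $\Q(\mu_t)\subseteq\Q(T,\mu)_t$ is clear, so I will prove equality by comparing degrees. Put $e=[\Q(T,\mu):\Q(T)]$ and let $\mu^{(1)},\ldots,\mu^{(e)}$ be the distinct conjugates of $\mu$ over $\Q(T)$ in a fixed normal closure $\mathbb{L}$ of $\Q(T,\xi)/\Q(T)$. For large $t$ the field $\Q(T,\mu)_t$ has degree at most $e$. Hence it suffices to show that the numbers $\mu^{(1)}_t,\ldots,\mu^{(e)}_t$ are pairwise distinct. They are roots of the specialized minimal polynomial of $\mu$, so distinctness gives $[\Q(\mu_t):\Q]\geq e$. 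The difficulty is that $c_5$ must not depend on $\mu$, which ranges over an infinite group. So I will reduce to a statement about one finitely generated group. Let $\Gamma\subset\mathbb{L}^*$ be the group generated by all conjugates of a fixed finite generating set of $\Z[T,\xi]^*$. It suffices to show that for $t\geq c_5$ no $v\in\Gamma$ with $v\neq1$ has $v_t=1$. Indeed, $v=\mu^{(i)}/\mu^{(j)}$ lies in $\Gamma$, and $v\neq 1$ for $i\neq j$.

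I split into two cases. If $v$ is constant, then it is algebraic and lies in the group generated by the conjugates of $\mu$, so Assumption~\ref{ass:Thomas-H}(2) makes it a root of unity $\zeta\neq1$. Then $v_t=\zeta\neq1$ for every $t$. The torsion of $\Gamma$ is finite, so only finitely many such $\zeta$ occur, and this case is harmless.

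For nonconstant $v$ I use Puiseux expansions at $T=\infty$. Fix free generators $g_1,\ldots,g_\kappa$ of $\Gamma$ modulo torsion. For each conjugate embedding $\sigma$ (that is, each Puiseux branch at infinity) write
$$\log|\sigma(g_j)_t| = a_{j,\sigma}\log t + b_{j,\sigma} + O(t^{-\delta}),$$
where $a_{j,\sigma}\in\Q$ is minus the order at the corresponding place above $\infty$. Now write $v=\omega\prod g_j^{\lambda_j}$ with $\omega$ torsion. Then
$$\log|\sigma(v)_t| = \Big(\sum_j\lambda_j a_{j,\sigma}\Big)\log t + O(|\lambda|)$$
with an implied constant depending only on $\Gamma$, where $|\lambda|=\max_j|\lambda_j|$.

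The key point, and the step I expect to be the main obstacle, is the lower bound
$$\max_\sigma\Big|\sum_j\lambda_j a_{j,\sigma}\Big|\geq c|\lambda|$$
with $c>0$ depending only on $\Gamma$. I will argue as follows. Every element of $\Gamma$ is a unit in the integral closure of $\Z[T]$ in $\mathbb{L}$, so its divisor is supported on the places above $\infty$. If all these orders vanish, the function has trivial divisor and is therefore constant, hence torsion by the first case. So the linear map $\lambda\mapsto(\sum_j\lambda_j a_{j,\sigma})_\sigma$ is injective on $\Z^\kappa$. Being a rational linear map, it is then injective on $\R^\kappa$, and the lower bound follows by compactness of the unit sphere.

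With this bound, some embedding $\sigma$ gives
$$\big|\log|\sigma(v)_t|\big|\geq\big(c\log t-O(1)\big)|\lambda|>0$$
once $t\geq c_5$, with $c_5$ independent of $\lambda$. Hence $v_t\neq1$, and all constants are effective since the Puiseux data are computable from $P$ and the generators. The only care needed is that the specialization map of the paper is compatible with these Puiseux branches for large $t$, which is stated just after Lemma~3.1 of \cite{Am-Ma-Za2}.
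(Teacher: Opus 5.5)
Your reduction to showing that no non-trivial quotient $\mu^{(i)}/\mu^{(j)}$ specializes to $1$ is the right one. However, the step meant to prove it fails. The compactness bound gives \emph{some} $\sigma$ with $|\sigma(v)_t|\neq1$, and from this you conclude $v_t\neq1$. That would need $v_t=1\Rightarrow\sigma(v)_t=1$ for every $\sigma\in G$, which is false. From $v_t=1$ you only get $\sigma(v)_t=1$ for those $\sigma$ for which $\sigma(v)_t$ is a $\Q$-conjugate of $v_t$, i.e.\ for $\sigma$ in a coset of the decomposition group of the place above $t$ in the Galois closure. This can be a proper subset of $G$: you have no Hilbert irreducibility for the Galois closure at every large $t$, and the statement assumes only Assumption~\ref{ass:Thomas-H}(2).

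The identity branch by itself carries no information in general. Take $\xi=\sqrt[3]{T^3-1}$ and the unit $\gamma=T-\xi$ of the introduction. Then $v=\gamma'/\gamma''$ has $|v_t|=1$ for every real $t$, because $\gamma'_t$ and $\gamma''_t$ are complex conjugates. So your maximum is attained at another conjugate such as $\gamma/\gamma''$, which says nothing about whether $\gamma'_t=\gamma''_t$. If you restrict the maximum to the admissible $\sigma$'s, the map $\lambda\mapsto(\sum_j\lambda_ja_{j,\sigma})_\sigma$ need no longer be injective. On its kernel only $\sum_j\lambda_jb_{j,\sigma}$ and lower-order terms survive, and these can be tiny compared with $|\lambda|$ (or vanish), so no $c_5$ independent of $\lambda$ comes out. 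Archimedean information at a single specialization is not enough here.

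What is missing is a global height argument of Bombieri--Masser--Zannier type (the paper itself only quotes this proposition from \cite{Am-Ma-Za2}). The height $h(v_t)$ automatically sees only the true $\Q$-conjugates. Uniformity in $\lambda$ comes from the Dirichlet reduction used in the proof of Theorem~\ref{thm:AMZbis}:
\begin{itemize}
\item Write $\lambda=n\mathbf p+\mathbf r$ with $\mathbf p$ in a finite set and $|\mathbf r|\ll n/Q+Q^{\kappa-1}$.
\item Then $v_t=1$ gives $n\,h(f(P_t))=h(\rho(P_t))$, with $f=\prod_jg_j^{p_j}$ non-constant from a finite set and $\rho=\prod_jg_j^{r_j}$.
\item Lemma~\ref{lem:AMZ-Lem3.3} then yields $h(P_t)=O(1)$ for $n$ large, contradicting $h(P_t)\gg\log t$ for the point $P_t$ of the curve above $t$.
\item Bounded $n$ leaves finitely many non-constant $v$, each of which is handled directly.
\end{itemize}

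Two smaller repairs are also needed.
\begin{itemize}
\item Pairwise distinct $\mu^{(i)}_t$ do not give $[\Q(\mu_t):\Q]\geq e$, since distinct roots do not make the specialized minimal polynomial $M_\mu(t,X)$ irreducible. Distinctness does suffice, but by Dedekind--Kummer: a non-vanishing discriminant at $T=t$ makes $\Q[T]_{(T-t)}[\mu]$ integrally closed, so the residue field at the chosen place is generated by $\mu_t$.
\item Assumption~\ref{ass:Thomas-H}(2) concerns the group generated by the conjugates of a \emph{single} unit, so ``hence torsion by the first case'' is unjustified for your large $\Gamma$. Work instead with the finitely many groups $\{\sigma_i(w)/\sigma_j(w):\ w\in\Z[T,\xi]^*\}$ for $i\neq j$. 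The assumption does apply to these, and they contain all the quotients $\mu^{(i)}/\mu^{(j)}$.
\end{itemize}
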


\begin{proposition}
\label{prop:Qmu+}
Let us assume~\ref{ass:Thomas-H}(2) and let us suppose $\Q(T,\xi)/\Q(T)$ be primitive  (\ie that it does not contain non trivial subextensions). Then there exists $c_6\geq1$ effectively depending on $\xi$ such that for an integer $t\geq c_6$ and for $\mu\in\Z[T,\xi]^*$ with $\mu_t\neq\pm1$, we have $\Q(\mu_t)=\Q(\xi_t)$. Moreover, the only possible non trivial subextensions of $\Q(\xi_t)/\Q$ are quadratic imaginary.
\end{proposition}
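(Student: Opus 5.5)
Write $K=\Q(T,\xi)$ and $L$ for its Galois closure over $\Q(T)$, with Galois group $G$ acting primitively on the $d$ roots (primitivity of $K/\Q(T)$ is equivalent to maximality of the stabiliser). For the first assertion take $\mu\in\Z[T,\xi]^*$. The field $\Q(T,\mu)$ is an intermediate field of $K/\Q(T)$, so by primitivity it is either $\Q(T)$ or $K$.

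If $\Q(T,\mu)=\Q(T)$, then $\mu$ is a unit of $\Z[T,\xi]$ lying in $\Q(T)$. It is integral over $\Z[T]$, so $\mu\in\Z[T]$, and likewise $\mu^{-1}\in\Z[T]$; hence $\mu=\pm1$ and $\mu_t=\pm1$. This case is excluded by hypothesis. Therefore $\Q(T,\mu)=K=\Q(T,\xi)$. By Proposition~\ref{prop:AMZ2-Prop5.4}, for $t\geq c_5$ we get $\Q(\mu_t)=\Q(T,\mu)_t=\Q(T,\xi)_t=\Q(\xi)_t$, and this equals $\Q(\xi_t)$ by Lemma~\ref{lem:AMZ2-Lem3.3}(2) for $t\geq c_4$. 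So the first claim holds with $c_6\geq\max\{c_4,c_5\}$. Proposition~\ref{prop:AMZ2-Prop5.4} is stated uniformly in $\mu$, so no dependence on $\mu$ enters.

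For the second assertion, suppose $k$ is a subfield of $\Q(\xi_t)$ with $\Q\subsetneq k\subsetneq\Q(\xi_t)$. I will bound the unit ranks. Lemma~\ref{lem:AMZ2-Lem3.3}(1) gives $\rank\O_{\Q(\xi_t)}^*=\rank(\Z[T,\xi]^*)_t$. Because this rank is finite and the specialized group has finite index, a suitable power of any unit of $\O_k^*$ equals $\mu_t$ for some $\mu\in\Z[T,\xi]^*$. Take $\varepsilon\in\O_k^*$ of infinite order and such a power $\varepsilon^m=\mu_t$. Then $\mu_t\neq\pm1$, so the first part gives $\Q(\mu_t)=\Q(\xi_t)$. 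On the other hand $\mu_t\in k$, a proper subfield, which is a contradiction. Hence $\O_k^*$ is finite, so $k$ is $\Q$ or imaginary quadratic by Dirichlet's unit theorem, and since $k\neq\Q$ it is imaginary quadratic.

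The main obstacle is the step "a power of $\varepsilon$ lies in $(\Z[T,\xi]^*)_t$". Finite index of $(\Z[T,\xi]^*)_t$ in $\O_{\Q(\xi_t)}^*$ follows because the two ranks are equal by Lemma~\ref{lem:AMZ2-Lem3.3}(1), and the rank of the smaller group $\Z[\xi_t]^*$ is sandwiched between them. Finite index implies that some positive power $\varepsilon^m$ with $m$ dividing the index lies in the subgroup. No uniformity in $m$ is needed, since the contradiction is purely qualitative. I would double-check that the constant $c_6$ stays effective, which it does because it only involves $c_4$ and $c_5$.
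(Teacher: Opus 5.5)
Your proposal is correct and follows the paper's proof essentially step for step: primitivity together with Proposition~\ref{prop:AMZ2-Prop5.4} and Lemma~\ref{lem:AMZ2-Lem3.3}(2) gives the first claim, and the equal-rank statement of Lemma~\ref{lem:AMZ2-Lem3.3}(1) produces a power of a non-torsion unit of the subfield lying in $(\Z[T,\xi]^*)_t$, which gives the second. The only differences are cosmetic. You rule out $\Q(T,\mu)=\Q(T)$ directly via $\Z[T]^*=\{\pm1\}$ rather than through $\Q(\mu_t)=\Q$, and you invoke the first claim instead of re-running the primitivity argument for the unit $u$.
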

\begin{proof} 
We choose $c_6=\max(c_4,c_5)$ where $c_4$ and $c_5$ are in the statements of  Lemma~\ref{lem:AMZ2-Lem3.3} and Proposition~\ref{prop:AMZ2-Prop5.4} respectively. Let $t\geq c_6$.

By Proposition~\ref{prop:AMZ2-Prop5.4}, $\Q(T,\mu)_t=\Q(\mu_t)$. Since $\Q(T,\xi)/\Q(T)$ is primitive, either $\Q(T,\mu)=\Q(T)$ or $\Q(T,\mu)=\Q(T,\xi)$. In the first case, $\Q(\mu_t)=\Q$ and thus $\mu_t=\pm1$ as a rational unity, which is excluded. Thus $\Q(T,\mu)=\Q(T,\xi)$ and $\Q(\mu_t)=\Q(T,\mu)_t=\Q(T,\xi)_t=\Q(\xi_t)$, by Proposition~\ref{prop:AMZ2-Prop5.4} and by Lemma~\ref{lem:AMZ2-Lem3.3}(2). This shows the first claim. 

To show the second claim, let $E/\Q$ be a subextension of $\Q(\xi_t)/\Q$ with $E\neq\Q$ and $E/\Q$ not imaginary quadratic. Then we can find a unit $v\in E$ which is not a root of unity. Since $\Z[T,\xi]_t^*$ has the same rank as the rank of the unit group of $\Q(\xi_t)$ (see Lemma~\ref{lem:AMZ2-Lem3.3}(1)), a power $v^l$ is in $\Z[T,\xi]_t^*$. Note that $v^l\not\in\Q$, since otherwise $v$ would be a root of unity. Summing up, we find $u\in\Z[T,\xi]^*$ with $u_t=v^l\in E\backslash\Q$. By Proposition~\ref{prop:AMZ2-Prop5.4} again, $\Q(u_t)=\Q(T,u)_t$ and $\Q(T,u)=\Q(T,\xi)$ since $\Q(T,\xi)/\Q(T)$ is primitive and $u\not\in\Q(T)$. Thus $\Q(\xi_t)\supseteq E\supseteq\Q(u_t)=\Q(T,\xi)_t=\Q(\xi_t)$.
%
%Let first assume $\Q(u_t)=E$. %By the same argument at the beginning of the proof, either $E=\Q(\xi_t)$ or $E=\Q$
%By the same argument at the beginning of the proof $E=\Q(T,u)_t$ and $\Q(T,u)=\Q(T)$ or $\Q(T,u)=\Q(T,\xi)$ since $\Q(T,\xi)/\Q(T)$ is primitive. Thus either $E=\Q(T)_t=\Q$ or $E=\Q(T,\xi)_t=\Q(\xi_t)$, where the last equality comes from~\cite[Lemma 3.3(2)]{Am-Ma-Za2}).
%
%Let now assume that $E$ is a CM field and $\Q(u_t)$ is its totally real subfield. Again, $\Q(u_t)=\Q(T,u)_t$ and $\Q(T,u)=\Q(T)$ or $\Q(T,u)=\Q(T,\xi)$. In the first 
%case $\Q(u_t)=\Q$ an thus $E/\Q$ is an imaginary quadratic extension. In the latter, $E$ is a quadratic extension of $\Q(u_t)=\Q(\xi_t)$, a contradiction.
\end{proof}

\begin{proof}[Proof of Theorem~\ref{thm:main}]
We fix a positive integer $t$ and we consider a solution $\xg=(x_0,\ldots,x_{d-1})\in W_t(\Z)$ of~\eqref{eq:norm-spe}. Then
\begin{equation}
\label{eq:alpha}
\alpha:=x_0+x_1\xi_t+\cdots+x_{d-1}\xi_t^{d-1}
\end{equation}
has norm $q$. We assume 
\begin{equation}
\label{eq:norm-x}
\Vert \xg\Vert_\infty>t^{C}\vert q\vert^{c}.
\end{equation}
for some $c,C\geq 1$, depending on $\xi$ and $W$, which will be fixed later. We want to bound $t$. We can assume that $\xg$ is primitive (otherwise, write $\xg=\delta\xg'$ with $\xg'$ primitive, and replace $\xg$ by $\xg'$ and $q$ by $q/\delta^d$). Thus
$$
h(\xg)=\log\Vert\xg\Vert_\infty.
$$
We let $k=\Q(\xi_t)$ and we adopt the convention~\ref{convention}. We assume
\begin{equation}
\label{eq:c4-c6}
t\geq\max(c_4,c_6)
\end{equation}
where $c_4$ and $c_6$ are in the statements of Lemma~\ref{lem:AMZ2-Lem3.3} and Proposition~\ref{prop:Qmu+} respectively. 

We partially follow the proof of~\cite[Theorem 2.2]{Am-Ma-Za2} at pages 905--906. Let us first recall the relevant notations. Let $\sigma_1,\ldots,\sigma_d$ be the $\Q(T)$-immersions of $\Q(T,\xi)$ in an algebraic closure $\Q(T,\xi)$. By Lemma~\ref{lem:AMZ2-Lem3.3}(2) we can identify $\sigma_1,\ldots,\sigma_d$ with the $\Q$-immersions of $\Q(\xi_t)$ in $\Qb$ by letting $\sigma_i(\sum a_j\xi_t^j)=\sum a_j(\sigma_i\xi)_t^j$ for $a_0,\ldots,a_{d-1}\in\Q$.

Conjugating~\eqref{eq:alpha} $d$ times we obtain the system $\alphag= A_t\xg$, where $A=(\sigma_i(\xi^j))$. Solving by Cramer's rule we get
$$
x_j=\frac{y_j}\Delta, \hbox{ with } y_j=\Delta_{j,1,t}\sigma_1(\alpha)+\cdots+\Delta_{j,d,t}\sigma_d(\alpha).
$$
Here $\Delta_{j,i,t}$ are $(d-1)\times(d-1)$ determinants with entries among the $\sigma_i(\xi_t^j)$. Since the ultrametric absolute values of the $\xi_t$'s are $\leq 1$, we get
\begin{equation}
\label{eq:alpha2}
\log\Vert\xg\Vert_\infty=h(\xg)=h(\yg)\leq h(\alphag)+c_7\log t
\end{equation}
with $c_7\geq 1$ depending only on $\xi$. We want to apply Lemma~\ref{lem:box}, with $U=(\Z[T,\xi]^*)_t$. To do that, we need a system of multiplicatively independent units of~$U$ of small height. We partially follow the proof of~\cite[Theorem 2.3]{Am-Ma-Za2} at p.910. By Lemma~\ref{lem:AMZ2-Lem3.3}(1) 
$\Z[T,\xi]^*$ has rank $\varrho=\rank \O_{\Q(\xi_t)}^*$. We fix a basis $\gamma^{(1)},\ldots,\gamma^{(\varrho)}$ of $\Z[T,\xi]^*$ modulo torsion. By \cite[Theorem 1' (p.1120)]{Bo-Ma-Za} (in a somewhat modified version: see~\cite[Theorem 3.2]{Am-Ma-Za2}) $\gamma_t^{(1)},\ldots,\gamma_t^{(\varrho)}$ is a basis of $(\Z[T,\xi]^*)_t$. We have 
$$
h(\gammag_t^{(1)})+\cdots+h(\gammag_t^{(\varrho)})\leq c_8\log t
$$
with $c_8\geq 1$ depending only on $\xi$. By Lemma~\ref{lem:box} there exists a unit $\eta\in(\Z[T,\xi]^*)_t$ such that $\alpha':=\eta^{-1}\alpha$ satisfies:
\begin{equation}
\label{eq:alpha'-bis}
h(\alphag')\leq\frac1d\log\vert q\vert+h(\gammag_t^{(1)})+\cdots+h(\gammag_t^{(\varrho)})\leq\frac1d\log\vert q\vert+c_8\log t.
\end{equation}
Let $\gamma\in\Z[T,\xi]^*$ such that $\gamma_t=\eta$. Thus $\gammag$ is in the finite rank subgroup %is in the finite rank group 
$$
\Gamma=\{(\sigma_1(u),\ldots,\sigma_d(u))\;\vert\;u\in\Z[T,\xi]^*\}\subseteq\Gm^d(\F).
$$
with $\F$ be the normal closure of $\Q(T,\xi)/\Q(T)$. By~\eqref{eq:alpha2} and since $h(\alphag)=h(\etag\alphag')\leq h(\etag)+h(\alphag')$, 
\begin{equation}
\label{eq:etag}
%h(\gammag_t)=
h(\etag)\geq h(\alphag)-h(\alphag')\geq \log\Vert\xg\Vert_\infty-c_7\log t - h(\alphag').
\end{equation}

Note that this implies that $\eta$ is not a root of unity. Otherwise, taking into account~\eqref{eq:alpha'-bis}, 
$$
\log\Vert\xg\Vert_\infty\leq \log\vert q\vert+(c_7+c_8)\log t
$$
which contradicts assumption~\eqref{eq:norm-x} provided that 
\begin{equation}
\label{eq:C0}
C\geq c_7+c_8.
\end{equation}

A typical element $v$ in the image $\Gamma'$ of $\Gamma$ by a surjective homomorphism $\Gm^d\rightarrow\Gm$ lies in the group generated by some $\sigma_1(u),\ldots,\sigma_d(u)$. By Assumption~\ref{ass:Thomas-H}(2), $v$ is a root of unity as soon as it is an algebraic number.  This proves that $\Gamma$ is constant-free. A fortiori, it satisfies assumption~\eqref{ass:H2}.

Since $W\subsetneq\A^d$, the vector $\xg$ satisfies an equation $\psi_{t,0}x_0+\cdots+\psi_{t,d-1}x_{d-1}=0$ with $\psig\in\Q(T,\xi)^d$ not zero. 
There exists $c_9\geq 1$, depending only on $\xi$ and $W$, such that $\psig_t\neq0$ when
\begin{equation}
\label{eq:c9}
t\geq c_9
\end{equation}
as we also assume from now on. 
Since $\xg\in\Z^d$, we have $\alphag\in H$, where $H$ is the hyperplane of equation $\theta_{1,t} y_1+\cdots + \theta_{d,t} y_d=0$ with
$$
\thetag=\psig A^{-1}
$$
and where, as before, $A=(\sigma_i(\xi^j))$. Since $\thetag\neq{\bf 0}$, up to renumbering we can assume that there exists $j\geq2$ such that 
\begin{equation}
\label{eq:hyperplane2}
\theta_{1,t}\alpha_1+\cdots + \theta_{j,t}\alpha_{j}=0\hbox{ with no proper vanishing subsums.} 
\end{equation}
We have assumed $\Q(T,\gamma)/\Q$ primitive. We have already remarked that $\eta$ is not a root of unity, and thus in particular $\eta\neq\pm1$. Then $\Q(\eta)=\Q(\xi_t)$ by Proposition~\ref{prop:Qmu+}.  Moreover, by the same proposition, the only possible non trivial subextensions of $\Q(\eta)/\Q$ are quadratic immaginary. On the other hand, $[\Q(\xi_t):\Q]=d$ by Lemma~\ref{lem:AMZ2-Lem3.3}(2) and $d$ is odd by assumption. Thus $\Q(\eta)/\Q$ is primitive as well. 
By Proposition~\ref{cor:height} we then have
$$
h(\etag)\leq 2^{d-2} h(\eta_1,\eta_2)< 2^{d}h(\eta_1,\ldots,\eta_{j}).
$$
Then, by~\eqref{eq:etag},
\begin{multline}
\label{eq:tilde}
h(\alpha'_1,\ldots,\alpha'_{j})\leq h(\alphag')\quad\hbox{and}\\ 
h(\gamma_{1,t},\ldots,\gamma_{j,t})=h(\eta_1,\ldots,\eta_{j})> 2^{-d}\big(\log\Vert\xg\Vert_\infty-c_7\log t - h(\alphag')\big).
\end{multline}
Since $\alphag=\alphag'\etag=\alphag'\gammag_t$, by~\eqref{eq:hyperplane2} we have 
\begin{equation}
\label{eq:hyperplane3}
\alpha'_1\theta_{1,t} \gamma_{1,t}+\cdots + \alpha'_{j}\theta_{j,t} \gamma_{j,t}=0,
\end{equation}
again with no proper vanishing subsums. We want to apply Theorem~\ref{thm:AMZbis}, with~\eqref{eq:hyperplane} replaced~\eqref{eq:hyperplane3}. 
Let $c',C',C''\geq 1$ be the constants which appear in this theorem. We check assumption~\eqref{eq:hgamma_t} of Theorem~\ref{thm:AMZbis}, with $\alphag$ replaced by $\alphag'$ and $P\in\Cu(\Qb)$ the point corresponding to the positive integer~$t$. 
\begin{align*}
h(&\gamma_{1,t},\ldots,\gamma_{j,t})- C'(1+h(t))\\
&\geq h(\gamma_{1,t},\ldots,\gamma_{j,t})-2C'\log t\\
&> 2^{-d}\left(\log\Vert\xg\Vert_\infty - (c_7+2^{d+1}C')\log t- h(\alphag') \right)\qquad\hbox{(by~\eqref{eq:tilde})}\\
&\geq 2^{-d}\Big(\log\Vert\xg\Vert_\infty-\tfrac1d\log\vert q\vert
- \big(c_7+c_8+2^{d+1}C')\big)\log t\Big)\quad\hbox{(by~\eqref{eq:alpha'-bis})}
\end{align*}
and
\begin{align*}
h(&\gamma_{1,t},\ldots,\gamma_{j,t})-c'\,h(\alpha'_1,\ldots,\alpha'_j)\\
%&\geq h(\gamma_{1,t},\ldots,\gamma_{j,t})-c'\,h(\alpha'_1,\ldots,\alpha'_j)-C'\log t\\
&> 2^{-d}\left(\log\Vert\xg\Vert_\infty - c_7\log t-(1+2^d c')h(\alphag')\right)\quad\hbox{(by~\eqref{eq:tilde})}\\
&\geq 2^{-d}\Big(\log\Vert\xg\Vert_\infty-(1+2^d c')\tfrac1d\log\vert q\vert\\
&\hskip 3.2cm - \big((1+2^d c')c_8+c_7)\big)\log t\Big)\quad\hbox{(by~\eqref{eq:alpha'-bis})}.
\end{align*}
We choose
\begin{align*}
C&=(1+2^d c')c_8+c_7+2^{d+1}C',\\
c&=(1+2^d c')\tfrac1d
\end{align*}
(note that $C\geq c_7+c_8$ as required in~\eqref{eq:C0}). Then 
$$
h(\gamma_{1,t},\ldots,\gamma_{j,t})\geq\max\{C'(1+h(t)),c'\,h(\alpha'_1,\ldots,\alpha'_j)\}
$$ 
by assumption~\eqref{eq:norm-x}. Thus~\eqref{eq:hgamma_t} of Theorem~\ref{thm:AMZbis}  is satisfied and this theorem ensures that $\log t=h(t)\leq C''$. Recall that we needed to assume $t\geq\max(c_4,c_6)$ (\cf~\eqref{eq:c4-c6}) and $t\geq c_9$ (\cf~\eqref{eq:c9}). In conclusion Theorem~\ref{thm:main} holds with the above choices of $c$ and $C$ and with 
$$
t_0=\max(\log c_4,\log c_6,\log c_9,C'').
$$
\end{proof}

\section{Justification of the examples.}
\label{sec:ex2}
We now show that Assumption \ref{ass:Thomas-H} holds for (\ref{2}), (\ref{2a}), (\ref{2aa}), (\ref{3}), (\ref{4}), (\ref{4a}) and (\ref{abc}). We do (\ref{4a}) first, as it has more units than the others, and then we sketch the simplifications for (\ref{2a}),(\ref{2aa}),(\ref{4}),(\ref{3}),(\ref{2}) and (\ref{abc}) in turn.

We have
$$X(X^4-1)(X^4-T^4)=X(X+1)(X-1)(X+T)(X-T)(X^2+1)(X^2+T^2)$$
and so
$$u_1=\xi,~~u_2=\xi+1,~~u_3=\xi-1,~~u_4=\xi+T,~~u_5=\xi-T,~~u_6=\xi^2+1$$
are certainly in ${\Z}[T,\xi]^*$ as in part (1). We must now show that they are multiplicatively independent.

We may choose a Laurent expansion
\begin{equation}\label{L}
\xi={1 \over T^4}+{1 \over T^{20}}+{1 \over T^{24}}+{5 \over T^{36}}+{9 \over T^{40}}+{5 \over T^{44}}+{35 \over T^{52}}+{91 \over T^{56}}+{91 \over T^{60}}+{35 \over T^{64}}+\cdots
\end{equation}
at $T=\infty$  (incidentally this allows the specializations $\xi_t$ to be defined simply by replacing $T$ with $t$ so large for convergence). Thus the leading terms of $u_1,u_2,u_3,u_4,u_5,u_6$ are $T^{-4},1,-1,T,-T,1$ respectively.

Therefore a dependence relation $$u_1^{m_1}u_2^{m_2}u_3^{m_3}u_4^{m_4}u_5^{m_5}u_6^{m_6}=1$$ 
clearly implies 
$$-4m_1+m_4+m_5=0.$$
Eliminating $m_5$ gives
\begin{equation}\label{vv}
v_1^{m_1}v_2^{m_2}v_3^{m_3}v_4^{m_4}v_6^{m_6}=\pm 1
\end{equation}
for
$$v_1=u_1u_5^4,~~v_2=u_2,~~v_3=-u_3,~~v_4=-{u_4 \over u_5},~~v_6=u_6.$$
We find that the leading terms of $v_1,v_2,v_3,v_4,v_6$ are all 1, so this too must be on the right-hand side of (\ref{vv}). We can now take formal logarithms to obtain $L=0$ for the linear form in logarithms
$$L=m_1\log v_1+m_2\log v_2+m_3\log v_3+m_4\log v_4+m_6\log v_6.$$
Here we could use Wronskians as in section~\ref{sec:ss} of \cite{Am-Ma-Za2}. But it is faster just to check that the coefficients 
$$m_2-m_3,~~-4m_1+2m_4,~~-{1 \over 2}m_2-{1 \over 2}m_3+m_6,~~-2m_1,~~m_1-{1 \over 4}m_2-{1 \over 4}m_3-{1 \over 2}m_6$$
of $T^{-4},T^{-5},T^{-8},T^{-10},T^{-16}$ in the expansion of $L$ are linearly independent. Therefore $m_1,m_2,m_3,m_4,m_6$ are all zero, and so $m_5$ too, giving the required multiplicative independence. This completes the verification of Assumption \ref{ass:Thomas-H} part (1).

As for part (2), this will follow from Lemma 3.5 of \cite{Am-Ma-Za2} (p.904) provided the Puiseux expansions of all the zeroes of $P(T,X)$ at $T=\infty$ are Laurent series with coefficients in some imaginary quadratic field. In fact this holds with ${\Q}(\sqrt{-1})$; we did not check this sort of thing very rigorously in \cite{Am-Ma-Za2}, but it can be done using Hensel as in Marzenta \cite{mar} (p.293). Thus Assumption \ref{ass:Thomas-H} is done.

We still have to check that there is no field strictly between ${\Q}(T)$ and ${\Q}(T,\xi)$. Now the galois group of $P(T,X)$ over ${\Q}(T)$ is the full symmetric $S_9$. This is just out of the range of Maple 2026; but Habegger suggested applying Dedekind's Theorem to the specialization $P_0=P(0,X)=X^9-X^5-1$. The reduction modulo 2 is irreducible, so $P_0$ is irreducible; and the group of $P_0$ is transitive in $S_9$. The reduction modulo 7 gives an eight-cycle. The reduction modulo 17 gives a transposition $\tau$ multiplied by a disjoint five-cycle, and the fifth power of this is $\tau$. Now any transitive subgroup $G$ of $S_9$ containing a transposition $\tau$ and an eight-cycle, say $\sigma=(12345678)$, must be $S_9$. Namely using transitivity to conjugate $\tau$ we can assume $\tau=(i9)$ for some $i \neq 9$. Then conjugating by powers of $\sigma$ we get $(i9)$ for all $i \neq 9$ inside $G$. Just these transpositions generate $S_9$. The same considerations apply to $P(t,X)$ for any $t$ in $n\Z$ with $n=2\times7\times17=238$. This is not a thin set, so by Hilbert Irreducibility $P(T,X)$ indeed has group $S_9$. One can also argue with ramification.

Now if there was an intermediate field as above it would have to be a cubic extension $K$ of ${\Q}(T)$. Then by looking at galois closures we would find a normal subgroup in $S_9$ of index 3 or 6 according to whether $K$ is galois or not. But it is well-known that the only indices of normal subgroups of $S_n~(n \neq 4)$ are $1,2,n!$. This settles the example (\ref{4a}). 

Incidentally these remarks show that the primitivity condition in Theorem~\ref{thm:main} indeed has a generic character. Namely a general $P(T,X)$ in $\Q[T,X]$ has $S_d$ as group over $\Q(T)$, and an intermediate field of degree $e$ over $\Q(T)$ would lead to a normal subgroup of $S_d$ of index $f$ with $e \leq f \leq e!$. Now $f=1$ implies $e=1$ and $f=d!$ implies $e=d$, both excluded; and then $f=2$ implies $e=2$ contradicting $d$ odd. 

One can also argue without normality. If $G \neq S_d$ is the group over an intermediate field, then the stabilizer $S_{d-1} \neq G$ corresponding to $\xi$ would be contained in $G$. So there is $g$ in $G$ with $g(\xi)=\xi'\neq \xi$. Now $gS_{d-1}g^{-1}$ is the stabilizer $S_{d-1}'$ corresponding to $\xi'$, so $G$ would contain both $S_{d-1}$ and $S_{d-1}'$. As $d \geq 3$ it is easy to see (e.g. via transitivity) that $G=S_d$, a contradiction.

For (\ref{2a}) we can prove the Assumption \ref{ass:Thomas-H} directly using the arguments above. Now the Laurent series have coefficients in $\Q$. Similar direct arguments suffice for (\ref{2aa}) with ${\Q}(\sqrt{-1})$.

Now we do (\ref{4}). We have
$$X^7-T^6X=X(X+T)(X-T)(X^2+TX+T^2)(X^2-TX+T^2)$$
and so we may use
$$u_1=\xi,~~u_2=\xi+T,~~u_3=\xi-T,~~u_4=\xi^2+T\xi+T^2$$
For the independence we choose a Laurent expansion
$$
\xi=-\frac1{T^6}-\frac1{T^{48}}-\frac7{T^{90}}- \frac{70}{T^{132}}
- \frac{819}{T^{174}}-\frac{10472}{T^{216}}- \frac{141778}{T^{258}}-\frac{1997688}{T^{300}}-\cdots,
$$ 
much nicer than (\ref{L}) and incidentally by Lagrange's Theorem it happens to be
\begin{multline*}
-\frac1{T^6}\left(1+\sum_{i=1}^\infty\frac1{i}{7i \choose i-1}\frac1{T^{42i}}\right)\\
=-\frac1{T^6}~_6F_5\left(\frac1{7},\frac27,\frac37,\frac47,\frac57,\frac67,\frac13,\frac12,\frac23,\frac56,\frac76;\frac{7^7/6^6 }{T^{42}}\right)
\end{multline*}
in the standard hypergeometric notation, so that the leading terms of $u_1,u_2,u_3,u_4$ are $-T^{-6},T,-T,T^2$ respectively.

Now a dependence relation $u_1^{m_1}u_2^{m_2}u_3^{m_3}u_4^{m_4}=1$ implies $-6m_1+m_2+m_3+2m_4=0$, and eliminating $m_3$ gives
$v_1^{m_1}v_2^{m_2}v_4^{m_4}=\pm 1$ for
$$v_1=-u_1u_3^6,~~v_2=-{u_2 \over u_3},~~v_4={u_4 \over u_3^2}.$$
As before we find that the leading terms of $v_1,v_2,v_4$ are all 1, and so $L=0$ for the corresponding linear form in logarithms. Here we check that
\begin{equation}\label{Ln}
L={l_1 \over T^7}+{l_2 \over T^{14}}+{l_3 \over T^{21}}+\cdots
\end{equation}
with independent linear forms
$$l_1=6m_1-2m_2-3m_4,~~l_2=-3m_1+{3 \over 2}m_4,~~l_3=2m_1-{2 \over 3}m_2.$$
So we get again Assumption \ref{ass:Thomas-H} part (1).

Part (2) follows again from Lemma 3.5 of \cite{Am-Ma-Za2}, this time with ${\Q}(\sqrt{-3})$. 

For (\ref{3}) we have
$$X^7+27T^6X=X(X^2+3T^2)(X^2+3TX+3T^2)(X^2-3TX+3T^2)$$
and units
$$u_1=\xi,~~u_2=\xi^2+3T^2,~~u_3=\xi^2+3T\xi+3T^2.$$
Now 
$$\xi={1/27 \over T^6}-{1/282429536481 \over T^{48}}+\cdots$$
so we would get $-3m_1+m_2+m_3=0$ in a dependence relation. And now after eliminating $m_3$ just $l_1,l_2$ in (\ref{Ln}) suffice. Also we get ${\Q}(\sqrt{-3})$ as above.

Similarly for (\ref{2}) using
$$X^5-T^4X=X(X+T)(X-T)(X^2+T^2)$$
and 
$$u_1=\xi,~~u_2=\xi+T,~~u_3=\xi-T$$
where $\xi=-1/T^4+\cdots$, this time with ${\Q}(\sqrt{-1})$.

We should not forget the cubic examples (\ref{XT}). The second and third of these are now very easy to treat. For the first the verification of Assumption \ref{ass:Thomas-H} part 2 is also easy directly, even though not all of the conjugates of $\xi$ are Laurent series (of course thanks to Theorem \ref{thm:main-cubic} this is no longer needed; it is enough to check the first sentence in this statement. Here we may note that the corresponding Lagrange-type expansion 
\begin{multline*}
\xi={1 \over T}-{1 \over T^4}+{3 \over T^7}-{12 \over T^{12}}+{55 \over T^{13}}-{273 \over T^{16}}+{1428 \over T^{19}}-{7752 \over T^{22}}+\cdots\\
={1 \over T}~_2F_1\left({1 \over 3},{2 \over 3},{3 \over 2};-{3^3/2^2 \over T^3}\right)
\end{multline*}
is similar to those on p.284 of Chudnovsky \cite{gc2}).

Finally we deal with (\ref{abc}). We failed to prove Assumption \ref{ass:Thomas-H} directly with functional methods (there are too many parameters in $A,B,C$), but it can be deduced from some arguments in \cite{Am-Ma-Za2} as follows.

There we ordered (the distinct) $A,B,C$ so that $A(t)<B(t)<C(t)$ for all sufficiently large $t>0$. 

If $C=B+2$ then we reduce (\ref{abc}) as in \cite{Am-Ma-Za2} (p.911) to the equation
$$x(x-2y)(x+ty)+y^3 = q.$$
This can easily be handled functionally as in the above examples.

If $C=B+1$ then we reduce it as in \cite{Am-Ma-Za2} (p.911) to
$$x(x-y)(x+ty)+y^3 = q$$
which can be similarly handled.

Otherwise we reduce to
$$x(x-r_0y)(x-s_0y)+y^3 = q$$
for $r_0=R(t),~s_0=S(t)$ and $R=B-A,~S=C-A$. Now $1 \leq r_0 \leq s_0-3$ and it was observed in \cite{Am-Ma-Za2} (p.911) that $z(z-r_0)(z-s_0)+1$ is irreducible, and that if $z_0$ is any zero then $z_0,z_0-r_0$ are multiplicatively independent units (again for $t$ large). It follows that $\xi,\xi-R$ are independent, else we could specialise a dependence relation by sending $T$ to $t$ and $\xi$ to $z_0$. This settles Assumption \ref{ass:Thomas-H} part 1, at least for $X(X-R)(X-S)+1$; and it follows at once for $(X-A)(X-B)(X-C)+1$. And part 2 follows in the usual way, because the Laurent series now have coefficients in $\Q$, by Lemma 6.1 (p.910) of \cite{Am-Ma-Za2}.

\section{small solutions.}
\label{sec:ss}
When $t>e^{C'}$, the conclusion $||{\bf x}||_\infty \leq t^{C}|q|^c$ of Theorem \ref{thm:main} seems quite strong. But already we see that for $q=1$ it could be weaker than the conclusion of Theorem \ref{thm:AMZ2-Thm2.2}, which imposes considerable extra functional structure on the solutions (possibly when there are no intermediate fields then there are at most finitely many functional solutions). Nevertheless when we know in advance that $||{\bf x}||_\infty \leq t^{m}$ for some fixed $m$, we can recover a slightly different sort of structure. We will illustrate this just for the example
\begin{equation}\label{qt}
x^3-(t^3-1)y^3=q~~~~~(t \geq 2,q \neq 0)
\end{equation}
as follows.

Using Pad\'e approximants we can construct equations
\begin{equation}\label{pa}
x_n^3-(t^3-1)y_n^3=q_n~~~~~(n=0,1,2,\ldots)
\end{equation}
where $x_n,y_n,q_n$ are polynomials in $t$ depending only on $n$. So these in some sense come from functional structure. Namely the hypergeometric polynomials
\begin{multline}
\label{hyp}
A_n(U)=F(-1/3-n,-n,-2n;U),\\~~B_n(U)=F(1/3-n,-n,-2n;U),
\end{multline}
now with $F=~_2F_1$, lie in ${\Q}[U]$ and have degree $n$. We define $d_n$ as the smallest positive integer such that $d_nA_n,d_nB_n$ lie in ${\Z}[U]$. Then
$$x_n=d_nt^{3n+1}A_n(t^{-3}),~~y_n=d_nt^{3n}B_n(t^{-3})$$
are in $\Z$ and then so is $q_n$ defined by (\ref{pa}).

For example
$$x_0=t,~y_0=1,~q_0=1,$$
$$x_1=3t^4-2t,~y_1=3t^3-1,~q_1=2t^3-1,$$
$$x_2=54t^7-63t^4+14t,~y_2=54t^6-45t^3+5,~q_2=756t^6-756t^3+125.$$

We first dispose of the case
\begin{equation}\label{g1}
|x-sy| \geq 1,~~~~s=\root 3 \of {t^3-1}
\end{equation}
in (\ref{qt}). This will lead to
\begin{equation}\label{mq}
M=\max\{|x|,|y|\} \leq 3|q|.
\end{equation}

If $y=0$ then $M=|x|=|q|^{1/3}$ so nothing more to do. Otherwise $q=(x-sy)(x-\omega sy)(x- \omega^2 sy)$ with $\omega=e^{2\pi i/3}$, and 
$$|x-\omega sy| \geq |\Im(\omega sy)|=\sqrt{3}s|y|/2$$
with the same for $x-\omega^2sy$, so
\begin{equation}\label{qq}|x-sy| \leq M_0
\end{equation}
for $M_0=4|q|/(3s^2|y|^2)$. Thus $|x| \leq s|y|+M_0$ so that $M \leq s|y|+M_0$ too. Also (\ref{qq}) with (\ref{g1}) gives
$$|q|^{1/2} > {3 \over 4}s|y| \geq {3 \over 4}(M-M_0).$$
If $M < 2M_0$ we get
$$M < {8 \over 3s^2}|q| \leq {3 \over t^2}|q|$$
better than (\ref{mq}). And if $M \geq 2M_0$ we get  $|q|^{1/2} \geq {3 \over 8}M$ so $M \leq 3|q|^{1/2}$ even better.

So henceforth we shall assume that $|x-sy| < 1$. This implies $y \neq 0$ and
\begin{equation}\label{mm}
{1 \over 2}s|y|  \leq s|y|-1 \leq |x|\leq M \leq \max\{1+s|y|,|y|\} \leq 2s|y|.
\end{equation}
\begin{proposition} Assume $|x-sy| < 1$. Then for any positive integer $m$ there are $c(m)\geq 1$ and $t_0(m)$ depending only on $m$ with the following properties.

(a) If $M \leq t^m$ and $t \geq t_0(m)$, then either
\begin{equation}\label{p1}M \leq c(m)t|q|
\end{equation}
or there exists $n \leq m/3$ and integers $a \geq 1,b$ with
\begin{equation}\label{p2}ax=bx_n,~ay=by_n,~~a \leq c(m),~|b| \leq c(m)t^{-n}|q|^{1/3}.
\end{equation}

(b) If $t^{m-1} <M \leq t^m$ and $c(m)|q| \leq t^{m-2}$ and $t \geq t_0(m)$, then (\ref{p2}) holds with $n=[m/3]$.
\end{proposition}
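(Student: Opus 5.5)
Our plan is to compare the rational approximation $x/y$ of $s$ with the Padé approximants $x_n/y_n$, in the spirit of the hypergeometric method of \cite{Za} and \cite{Be}. We first record the standard properties of the sequence in (\ref{hyp}). Since $x_n-sy_n$ is the Padé remainder, we have $|x_n-sy_n|\asymp t^{-3n-2}$ (up to factors $c^n$), while $|y_n|\asymp t^{3n}$, $d_n\le c^n$, and $|q_n|\ll c^n t^{3n}$. We also need the non-vanishing of the determinant $x_ny_{n+1}-x_{n+1}y_n$, which should be an explicit polynomial in $t$ of size $\asymp c^n t^{3}$ (or at worst $t^{O(1)}$). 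This follows from the standard contiguity/Wronskian identity for the ${}_2F_1$ Padé polynomials. All implied constants here depend only on $n$, and since $n\le m/3$ they depend only on $m$.

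Next comes the key comparison. Assume $|x-sy|<1$, so (\ref{mm}) holds, and from (\ref{qq}) we get $|x-sy|\le M_0\ll |q|/(t^2y^2)$. For each $n\le m/3+1$ we form the integer $D_n=xy_n-yx_n=y_n(x-sy)-y(x_n-sy_n)$. This gives
$$|D_n|\ll_m t^{3n}\frac{|q|}{t^2y^2}+|y|\,t^{-3n-2}.$$
We choose $n$ with $t^{3n}\approx |y|$, say $n$ maximal with $t^{3n}\le |y|t^{\,1}$ or similar. Since $|y|\le M\le t^m$, we have $n\le m/3$. Both terms are then $\ll_m |q|t^{-1}$ or smaller (more precisely $\ll t^{-1}$ for the second term). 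If $D_n\neq0$ then $|D_n|\ge1$, and the estimate forces $|y|$ to be bounded by $c(m)|q|$, up to the appropriate power of $t$. Choosing $n$ to be slightly off the balancing point in either direction gives (\ref{p1}), namely $M\le c(m)t|q|$. If instead $D_n=0$, then $(x,y)$ is proportional to $(x_n,y_n)$. We write $(x_n,y_n)=g(x_n',y_n')$ with $(x_n',y_n')$ primitive and $g=\gcd(x_n,y_n)$. Then $(x,y)=b'(x_n',y_n')$, so $ax=bx_n$, $ay=by_n$ with $a=g/\gcd(g,b')$. The needed bound $a\le c(m)$ requires $g$ to be bounded in terms of $n$ only. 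We expect to get this from $\gcd(x_n,y_n)\mid x_ny_{n+1}-x_{n+1}y_n$ together with a finer arithmetic analysis showing that this resultant has only primes dividing $d_n$-type factors. This is the main obstacle, and we would try first to compute $x_n,y_n$ modulo small powers of $t$ (for instance $y_n\equiv d_nB_n$ coefficient at $t^0$), so that any common prime divides a fixed integer depending on $n$. Finally, taking norms gives $b^3q_n=a^3q$. Since $|q_n|\gg_m t^{3n}$ (from its leading coefficient, e.g. $756t^6$), we get $|b|\le c(m)t^{-n}|q|^{1/3}$.

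For part (b), suppose $t^{m-1}<M\le t^m$ and $c(m)|q|\le t^{m-2}$. Then (\ref{p1}) would give $M\le t^{m-1}$, which is impossible, so (\ref{p2}) holds for some $n$. From $ay=by_n$ with $a,|b|$ controlled, together with $|y|\asymp M/t$ and $|y_n|\asymp t^{3n}$, we must have $t^{3n}\asymp M/t$ up to $|q|^{1/3}c(m)$. Using $|q|\le t^{m-2}$ then pins $3n$ between $m-2-O(1)$ and $m$, and hence forces $n=[m/3]$ once $t\ge t_0(m)$. The small slack needs checking: the possibility $n=[m/3]-1$ should be excluded by $|b|\ge1$, which gives $t^{n}\le c|q|^{1/3}$, combined with the size constraint. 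If this fails in edge cases, we would refine the choice of the balancing index $n$ in the previous paragraph.
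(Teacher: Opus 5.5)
Your route is the paper's: compare with the Pad\'e approximants via the integer $D_n=xy_n-yx_n$, get \eqref{p1} when $D_n\neq0$, and when $D_n=0$ bound $a$ arithmetically and $b$ through $a^3q=b^3q_n$. As written, though, it does not close.

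\textbf{The choice of $n$.} The step the whole dichotomy rests on fails for your choice. If $n$ is maximal with $t^{3n}\le|y|t$, you only know $|y|<t^{3n+2}$. Then $|y|\,|x_n-sy_n|\le C_n|y|t^{-3n-2}$ is merely $<C_n$, not $\ll t^{-1}$, and $|D_n|\ge1$ gives nothing. The paper first reduces to $m$ minimal, i.e.\ $t^{m-1}<M\le t^m$, and takes $n=[m/3]$. Then $3n+3\ge m+1$ makes the first term in \eqref{za} at most $C/t$. Also $3n\le m$ and $t^m<tM$ turn $1\ll t^{3n-2}|q|/|y|^2$ into $M^2\ll t^m|q|\ll tM|q|$.

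\textbf{Part (b).} With the paper's choice of $n$, part (b) is immediate: $m$ is automatically minimal there, and \eqref{p1} contradicts $c(m)|q|\le t^{m-2}$. Your size argument for (b) also closes. Indeed $t^{3n+1}\ll M\ll t^{2n+1}|q|^{1/3}$, together with $t^{m-1}<M\le t^m$ and $|q|\le t^{m-2}$, forces $m-2\le 3n\le m-1$, which pins down $n=[m/3]$ (and is impossible when $3\mid m$).

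\textbf{The bound on $a$.} The bound $a\le c(m)$ is left as an expectation. Controlling only the primes dividing $\gcd(x_n,y_n)$ would not suffice, since primes of $t$ could a priori occur to unbounded powers. Your determinant is in fact a good tool. The polynomial $A_nB_{n+1}-A_{n+1}B_n$ has degree $\le 2n+1$ and equals $c_{2n+1}U^{2n+1}+O(U^{2n+2})$. Hence $x_ny_{n+1}-x_{n+1}y_n=d_nd_{n+1}c_{2n+1}\,t=:e_nt$ (it is $t$ and $4t$ for $n=0,1$), so $a\mid e_nt$.

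The missing step is the paper's. The number $u=\gcd(a,t)$ divides the constant term $f_n$ of $y_n$, which is nonzero because $\deg B_n=n$. Writing $a=a_1u$ and $t=t_1u$ gives $a_1\mid e_nt_1$ with $\gcd(a_1,t_1)=1$, hence $a\le|e_nf_n|$. The paper does the same with the resultant of $A_n,B_n$, obtaining $a\mid\tilde r_nt^N$.

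\textbf{The lower bound for $|q_n|$.} The bound $|q_n|\gg t^{3n}$ is not justified by an example of a leading coefficient; it needs $c_{2n+1}\neq0$. The paper uses this through $q_n=d_n^3t^{9n+3}\delta_0\delta_1\delta_2$, with $|\delta_0|\gg t^{-6n-3}$ and $|\delta_1|,|\delta_2|\gg1$. This is equivalent to your listed property $|x_n-sy_n|\asymp t^{-3n-2}$, so you had the ingredient but did not use it.
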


\bigskip
Note that (\ref{p1}) is much stronger than $M \leq t^{C}|q|^c$ (an intermediate result $M \leq t^B|q|^{1+\varepsilon}$ can be found in \cite{Za} (p.352).

Note too that in (b) we get $|b| \leq c(m)$ in (\ref{p2}).

We start on the proof. As in (7) of Baker \cite{bah} (p.377) we have
$$A_n(U)-(1-U)^{1/3}B_n(U) = \sum_{k=2n+1}^\infty c_kU^k$$
convergent for $|U|<1$. Putting $U=t^{-3}$ and multiplying by $d_nt^{3n+1}$ we find
$$|x_n| \ll t^{3n+1},~~|y_n| \ll t^{3n},~~|x_n-sy_n| \ll t^{-3n-2}$$
where from now on the implied constants depend only on $n$.

As around (\ref{qq}) we have 
$$|x-sy| \ll {|q| \over t^2|y|^2}.$$ 
Eliminating $s$ gives
$$|x_ny-xy_n| \ll {|y| \over t^{3n+2}}+{|y_n||q| \over t^2|y|^2}$$
which by (\ref{mm}) gives in turn
\begin{equation}\label{za}
|x_ny-xy_n| \ll {M \over t^{3n+3}}+{t^{3n-2}|q| \over |y|^2}.
\end{equation}

To prove part (a) we may assume that $m$ is minimal, so that $t^{m-1} <M \leq t^m$ (in fact as in part (b) above). We choose $n = [m/3]$, so that
$$3n+3 \geq m+1,~~~3n \leq m,$$
and now the first term in (\ref{za}), even taking into account the implied constant, is at most $1/2$ provided $t \geq t_0(m)$.

Thus if $x_ny \neq xy_n$ we get $1 \ll {t^{3n-2}|q| / |y|^2}$ and so
$$M^2 \ll t^{3n}|q| \ll t^m|q| \ll tM|q|$$
leading to (\ref{p1}).

And if $x_ny = xy_n$ we get the first two equations in (\ref{p2}) for $a \geq 0,b$ coprime. But if $a=0$ then $x_n=y_n=0$. Thus $A_n(U),B_n(U)$ would have a common factor $U-t^{-3}$, well-known to be impossible (see for example (8) of \cite{bah} p.377). Therefore $a \geq 1$. In fact $B_n(0)=1$, so actually $A_n(U),B_n(U)$ are coprime. Thus their resultant $r_n \neq 0$ (conjecturally
$$r_n=(-1)^{n(n+1)/2}(3n)^{-n}\prod_{m=1}^{n-1}\left({m^2-{1\over 9} \over n^2-m^2}\right)^{n-m}$$
by the way). Here $r_n=CA_n+DB_n$ for $C,D$ in ${\Q}[U]$, and there is a smallest positive integer $e_n$  such that $e_nC,e_nD$ are in ${\Z}[U]$. Then $\tilde r_n=d_ne_nr_n$ is in $\Z$. As $a$ divides $x_n=d_nt^{3n+1}A_n(t^{-3})$ and $y_n=d_nt^{3n}B_n(t^{-3})$ a quick calculation shows that $a$ divides $\tilde r_nt^N$ for some integer $N=N(n) \geq 0$. But the constant term $f_n$ of $y_n$ (considered as a polynomial in $t$) is non-zero because $B_n$ has degree $n$. Now the highest common factor $u$ of $a$ and $t$ must divide $f_n$. Writing $a=a_1u,t=t_1u$ we find that $a_1$ divides $\tilde r_nu^N$. Thus $a=a_1u \leq \tilde r_nf_n^{N+1}$ as required in (\ref{p2}).

As for $b$ in (\ref{p2}), we have $a^3q=b^3q_n$, so certainly
\begin{equation}\label{bb}|b| \ll |q/q_n|^{1/3}.
\end{equation}
Here $q_n=d_n^3t^{9n+3}\delta_0\delta_1\delta_2$ with
\begin{align*}
\delta_0&=A_n(t^{-3})-(s/t)B_n(t^{-3}),\\
\delta_1&=A_n(t^{-3})-\omega(s/t)B_n(t^{-3}),\\
\delta_2&=A_n(t^{-3})-\omega^2(s/t)B_n(t^{-3}).
\end{align*}
Now
$$|\delta_0|=\left |\sum_{k=2n+1}^\infty c_kt^{-3k}\right| \gg t^{-6n-3}$$
because $c_{2n+1} \neq 0$ (see (7),(10),(11) of \cite{bah}  p.377). Also
$$|\delta_1| \geq |\Im \delta_1| = {\sqrt{3} \over 2}{s \over t}B_n(t^{-3})$$
and as remarked $B_n(0)=1$, so $|\delta_1| \gg 1$ and similarly for $\delta_2$.

Together we get $|q_n| \gg t^{3n}$ and thus by (\ref{bb}) we get the required upper bound for $b$ in part (a) of the Proposition.

In connexion with the above lower bound for $|q_n|$ it might be interesting to know if the polynomial
$$A_n(U)^3-(1-U)B_n(U)^3,$$
already divisible by $U^{2n+1}$, also has a hypergeometric nature as in (\ref{hyp}). But this seems unlikely; for example the above for $n=5$ is
$${2^213 \over 3^{16}}U^{11}\left(1-{5 \over 2}U+{2339 \over 1053}U^2-{292 \over 351}U^3+{10102 \over 85293}U^4-{1331 \over 341172}U^5\right)$$
and 2339 is prime. Also the expression in brackets does not have the form $F(\alpha,-5,\gamma;U)$ for any complex $\alpha,\gamma$, but it is actually the first six terms in the power series expansion of
$$F\left(-{1\over 3}-5,-5,-10;U\right)^2F\left(-{1 \over 3}+6,6,12;U\right).$$

Part (b) follows very quickly. For $m$ is now automatically minimal and so the arguments above give $n=[m/3]$ and some $c(m)$ satisfying (\ref{p2}). If we take $c(m)|q| \leq t^{m-2}$ for this $c(m)$, then (\ref{p1}) is impossible and we get (\ref{p2}). This completes the proof of the Proposition.

We note that equations like (\ref{pa}) can be constructed also with the help of the functional continued fraction for
\begin{multline*}
\root 3 \of {T^3-1}\\=[T;-3T^2,T,-{9 \over 2}T^2,{4 \over 5}T,-{75 \over 14}T^2,{7 \over 10}T,-6T^2,{7 \over 11}T,-{594 \over 91}T^2,{13 \over 22}T,\dots]
\end{multline*}
in the standard notation (note the apparent periodicity of the degrees of the partial quotients, in the spirit of a result of the third author \cite{zcf}). In fact the zeroth approximation $[T;]=T/1$ at $T=t$ gives $x_0/y_0$ as above, while the next $[T;-3T^2]=(3T^3-1)/(3T^2)$ gives something new. But the next $[T;-3T^2,T]$ gives $x_1/y_1$ above. Actually it seems that the approximations of even order $2n$ give the $x_n/y_n$, while the other approximations of odd order $2n+1$ give the ``intermediate''
$${(3n+2)x_n+(12n+6)x_{n+1} \over (3n+2)y_n+(12n+6)y_{n+1}}.$$
Probably one can weaken the hypotheses of the Proposition so that these also turn up. We chose to use Pad\'e because for three variables $x,y,z$ (and more) there may be no convenient analogue of continued fractions.

Thus in the Lombardo-Marzenta example with $\xi_t$ as the real zero of $X^5+4t^4X-1$ and the norm equation
$$
\Norm(x+\xi_ty+\xi_t^2z)=q
$$
one would presumably have to construct a pair
$$
\Norm(x_n+\xi_ty_n+\xi_t^2z_n)=q_n,~~\Norm(x_n'+\xi_ty_n'+\xi_t^2z_n')=q_n'
$$
instead of just (\ref{pa}), presumably via the smallness of the two corresponding linear forms. Then one would have to know that these linear forms cannot be too small (analogous to $|q_n| \gg t^{3n}$ above). This may require some sort of functional Subspace Theorem (known to be effective).

\section{approximation measures.}
\label{sec:irr}

It is classical that bounds for the solutions of~\eqref{eq:ge} lead to irrationality measures for $s=\root 3 \of {t^3-1}$ which improve the Liouville estimate. Theorem \ref{thm:main} (as in~\eqref{eq:gea} above) leads easily to a result in the form
\begin{equation}
\label{eq:gs}
|x-sy| \geq {c \over t^CM^\kappa}
\end{equation}
for $x,y$ in $\Z$ with $M=\max\{|x|,|y|\} \geq 1$; here for some $\kappa<2$ and $c>0,C$ absolute effective. By using the Beukers method and being more careful with heights the third author in \cite{Za} p.352 could reach any $\kappa>1$. But already this precision was observed by Baker in 1964 using hypergeometric functions (this time as introduced by Thue 1908). For example in the Theorem (p.375) of \cite{bah} we may take $m=1,n=3$ and $a=3t^3,b=3(t^3-1)$ to get a result for $\alpha=t/s$. Then~\eqref{eq:gs} follows quickly with $C=3$ and any $\kappa>1$, provided $t$ is large enough with respect to $\kappa$. And since then several people have taken this hypergeometric method further, notably Bennett~\cite{Bennett}.

As noted one can take the exponent of $|q|$ in (\ref{cubest}) as anything greater than 1 and so deduce (\ref{eq:gs}) for the corresponding $s$ with any $\kappa>1$. Here $s$ is the less simple expression
\begin{equation}\label{cardano}
{\root 3 \of {108+12\sqrt{12t^3+81}} \over 6}-{2t \over \root 3 \of {108+12\sqrt{12t^3+81}}}.
\end{equation}

All our new results on diophantine equations can similarly be converted into analogous inequalities involving three or more variables. We give just one example in four variables for the real zero $\xi_t$ of (\ref{1}) with $T=t$. By trivially estimating the non-real conjugates from above, we find that for $t>t_0$
\begin{equation}\label{bf}
|x+\xi_ty+\xi_t^2z+\xi_t^3w| \geq {c \over t^{C}M^\kappa}
\end{equation}
for $x,y,z,w$ in $\Z$ with $M=\max\{|x|,|y|,|z|,|w|\} \geq 1$, now for some $\kappa<4$, with $c>0$ and $C$ absolute effective.

The Liouville estimate gives the same inequality with $M^4$ instead of $M^\kappa$ (and all $t \geq 1$). Just as Baker and Feldman (see for example \cite{baf} p.54) improved by a small amount the Liouville exponent for $x-\alpha y$ with general algebraic $\alpha$ (of degree at least three), this (\ref{bf}) can be regarded as a special analogue in four variables, with an algebraic number $\xi_t$ varying through a parameter $t$, and an explicit dependence on $t$ which is not too bad.

On the other hand the Schmidt Subspace Theorem (see for example \cite{sch} p.181) implies (\ref{bf}) for any $\kappa>3$ (this would be practically best possible), but now ineffectively.

Finally we should mention the papers \cite{gc1},\cite{gc2} of Chudnovsky and \cite{zud} of Zudilin. They focus on the ``irrationality type'', informally defined as the smallest $\kappa+1$ in (\ref{eq:gs}), however without any consideration of possible expressions multiplying the $M^{-\kappa}$. 

The Main Theorem (p.371) of \cite{gc1} obtains $\kappa<2$ for a large class of cubic irrationals (see also pp 380,381 for nice specific examples related to modular functions).

And in \cite{gc2} any $\kappa>1$ is obtained for a large class of parametric cubic irrationalities including (\ref{cardano}); see also \cite{Am-Za} (pp. 1783-1788).

Also in \cite{zud} there is a generalisation to $G$-functions; see also  \cite{Am-Za} (p. 1783).

\section{appendix.}
\label{sec:app}

Here we list the adjustments to \cite{Am-Ma-Za2} necessitated by the changeover from \cite[Assumption 2.1]{Am-Ma-Za2} to Assumption 1.1.

We point out at once that these adjustments come from our incorrect definition of the numbers $r_1$ and $r_2$.\\

The statement of Lemma 3.1 has to be adjusted to involve $\varrho_1$; but the proof is correct and shows this quantity indeed to be constant for large $t$. And of course then so is $2\varrho_2=d-\varrho_1$. The sentence following the proof has to be omitted.

In Lemma 3.3 and its proof $r,r_1,r_2$ have to be replaced by $\varrho,\varrho_1,\varrho_2$ throughout. But the proof then shows that the latter are in fact the former. Part (2) then says that $[\Q(\xi_t):\Q]=d$ for all sufficiently large positive integers $t$, as mentioned just after (\ref{NormForm}). This is a rather strong form of the Hilbert Irreducibility Theorem.

Now that we know that $\varrho=r$ and so on, no adjustments are needed in sections 4 and 5, in particular for the proofs of Proposition 5.4 and Theorems 2.2,~2.3,~2.4,~2.5.

And for the examples in sections 6 and 7, the Assumption 1.1 can be checked directly from the Puiseux series.

\noindent

{\bf F. Amoroso:} Università di Torino, Dipartimento di Matematica, Via Carlo Alberto 10, 10123 Torino, Italy.
 ({\it francesco.amoroso@unito.it}).
\noindent

{\bf D. Masser:} Departement Mathematik und Informatik, Universit\"at Basel, Spiegelgasse 1, 4051 Basel, Switzerland ({\it David.Masser@unibas.ch}).
\noindent 

{\bf U. Zannier:} Scuola Normale Superiore, Piazza dei Cavalieri 7, 56126 Pisa, Italy ({\it u.zannier@sns.it}).

\begin{thebibliography}{[Bo--Gi--SO]}

%\bibitem{Al-Ro} K. Alladi and M. L. Robinson, ``Legendre polynomials and irrationality." {\it J. Reine Angew. Math.} {\bf 318} (1980), 137--155.

\bibitem{Am-Ma-Za} F. Amoroso, D. Masser and U. Zannier, ``Bounded height in pencils of finitely generated subgroups.", {\it Duke Math J.} {\bf 166}, no. 13 (2017), 2599--2642.

\bibitem{Am-Ma-Za2} F. Amoroso, D. Masser and U. Zannier, ``Pencils of norm form equations and a conjecture of Thomas",  {\it Mathematika} {\bf 67}  (2021), 897--916.

\bibitem{Am-Za} F. Amoroso and U. Zannier, ``Irrationality measures for cubic irrationals whose conjugates lie on a curve", {\it Math. Zeit.} {\bf 299} (2021), 1767--1788.

\bibitem{baj} P. Bajpai, ``Effective methods for norm-form equations", {\it Math. Ann.} {\bf 387} (2023), 1271--1288.

\bibitem{bab} P. Bajpai and M.A. Bennett, ``Effective $S$-unit equations beyond three terms: Newman's
conjecture", {\it Acta Arith.} {\bf 214} (2024), 421--458.

\bibitem{bah} A. Baker, ``Rational approximations to $\root 3 \of 2$ and other algebraic numbers", {\it Quart. J. Math. Oxford} {\bf 15} (1964), 375--383.

\bibitem{baq} A. Baker, ``Simultaneous rational approximations to certain algebraic numbers", {\it Proc. Cambridge Phil. Soc.} {\bf 63} (1967), 63--82.

\bibitem{bac} A. Baker, ``Contributions to the theory of Diophantine equations. I. On the representation of integers by binary forms", {\it Phil. Trans. Royal Soc. London Ser. A {\bf 263}} (1967/68), 173--191

\bibitem{baf} A. Baker and G. W\"ustholz, ``Logarithmic forms and diophantine geometry". New Math. Mono. {\bf 9}, Cambridge 2007.

\bibitem{Bennett} M.A. Bennett, ``Explicit lower bounds for rational approximation to algebraic numbers", {\it Proc. London Math. Soc.} {\bf 75} (1997), 63--78.

\bibitem{Bennett2} M.A. Bennett, ``Rational approximation to algebraic numbers of small height : the Diophantine equation $|ax^n-by^n|=1$, {\it J. reine angew. Math.} {\bf 535}  (2001), 1--49.

\bibitem{Be} F. Beukers, ``On a sequence of polynomials", {\it J. Pure and Applied Algebra} {\bf 17/18} (1997) 97--103.

\bibitem{Be-Sc} F. Beukers and H-P. Schlickewei, ``The equation $x + y = 1$ in finitely generated groups". {\it Acta Arith.} {\bf 78} (1996), no. 2, 189--199.

\bibitem{Bo-Ma-Za} E.~Bombieri, D.~Masser, and U.~Zannier, ``Intersecting a curve with algebraic subgroups of multiplicative groups", Int. Math. Res. Notices {\bf 20} (1999), 1119--1140.

\bibitem{sch} E. Bombieri and W. Gubler, ``Heights in diophantine geometry", New Math. Mono. {\bf 4}, Cambridge 2006.

\bibitem{bug} Y. Bugeaud, ``Bornes effectives pour les solutions des \'equations en $S$-unit\'es et des \'equations de Thue-Mahler", {\it J. Number Theory} {\bf 71} (1998), 227--244.

\bibitem{gc1} G.V. Chudnovsky, ``On the method of Thue-Siegel'', {\it Annals Math.} {\bf 117} (1983), 325--382.

\bibitem{gc2} G.V. Chudnovsky, ``The Thue-Siegel-Roth theorem for values of algebraic functions'', {\it Proc. Japan Acad.} {\bf 59A} (1983), 281--284.

\bibitem{Di-Mo} J. Dixon and B. Mortimer, ``Permutation groups". GTM {\bf 163}, Springer-Verlag, New York, 1996.

\bibitem{gal} K. Gy\"ory and L. Lov\'asz, ``Representation of integers by norm forms II", {\it Publ. Math.  Debrecen} {\bf 17} (1970), 173--181.

\bibitem{gyo} K. Gy\"ory, ``Sur une classe des corps de nombres algébriques et ses applications", {\it Publ. Math. Debrecen} {\bf 22} (1975), 151--175.

\bibitem{heu} C. Heuberger, ``On a family of quintic thue equations", {\it J. Symbolic Computation} {\bf 26} (1998), 173--185.

\bibitem{lee} E. Lee, ``Studies on Diophantine equations", Ph.D. thesis, Cambridge University 1992.

\bibitem{lom} D. Lombardo, ``A family of quintic Thue equations via Skolem's $p$-adic method", {\it Riv. Math.
Univ. Parma (N.S.)} {\bf 13} (2022), 161--173.

\bibitem{mar} G. Marzenta, ``Effective resolution of families of norm form equations", {\it Rend. Lincei Mat. Appl.} {\bf 36} (2025), 285--313.

\bibitem{mau} E. Maus, ``Zur Arithmetik einiger Serien nichtaufl\"osbarer Gleichungen 5. Grades", Abh. Math. Sem. Univ. Hamburg {\bf 54} (1984), 227--250.

\bibitem{mat} M. Mignotte and N. Tzanakis, ``On a family of cubics", {\it J. Number Th.} {\bf 39} (1991), 41--49.

\bibitem{sch}  W.M. Schmidt, ``Approximation to algebraic numbers", {\it L'Enseignement Math.} {\bf 17} (1971), 187--253.

\bibitem{tho} E. Thomas, ``Solutions to certain families of Thue equations", {\it J. Number Th.} {\bf 43} (1993), 319--369.

\bibitem{wak} I. Wakabayashi, ``On a family of cubic Thue equations with 5 solutions", {\it Acta Arith.} {\bf 109} (2003), 285--298.

\bibitem{zcf} U. Zannier, ``Hyperelliptic continued fractions and generalized Jacobians", {\it Amer. J. Math.} {\bf 141} (2020), 1--40.

\bibitem{Za} U. Zannier, ``Lecture notes on Diophantine analysis'', EMS Series of Lectures in Mathematics, with an appendix by F. Amoroso, EMS, Z\"urich, 2024.

\bibitem{zud} V.V. Zudilin, ``On a measure of irrationality for values of $G$-functions'', {\it Izvestiya: Mathematics} {\bf 60} (1996), 91--118.

\end{thebibliography}
\end{document}